\newtheorem{theorem}{Theorem}[section]
\newtheorem{lemma}[theorem]{Lemma}
\theoremstyle{definition}
\newtheorem{remark}[theorem]{Remark}
\newcommand{\rank}{\mathop{\rm rank}\nolimits}
\newcommand{\conf}{\mathop{\rm Conf}\nolimits}
\title {Log--optimal $(d+2)$-configurations in $d$--dimensions}
\author {Peter D. Dragnev and Oleg R. Musin}
\begin{document}
\date{}
\maketitle


%

\begin{abstract} We enumerate and classify all stationary logarithmic configurations of $d+2$ points on  the unit sphere in $d$--dimensions. In particular, we show that the logarithmic energy attains its local minima at configurations that consist of two orthogonal to each other regular simplexes of cardinality $m$ and $n$. The global minimum occurs when  $m=n$ if $d$ is even and $m=n+1$ otherwise. This characterizes a new class of configurations that minimize the logarithmic energy on $\mathbb{S}^{d-1}$ for all $d$. The other two classes known in the literature, the regular simplex ($d+1$ points on $\mathbb{S}^{d-1}$) and the cross polytope ($2d$ points on $\mathbb{S}^{d-1}$), are both universally optimal configurations.
\end {abstract}

\medskip

\noindent {\bf Keywords:} Thomson's problem, Riesz potential, logarithmic energy, optimal configurations

\vskip 2mm

\noindent {\bf Mathematics Subject Classification}: {{\it Primary} 74G05, 74G65; {\it Secondary} 31B15, 31C15}

\medskip


\section{Introduction and main result}
\setcounter{equation}{0}

 Let $X=\{x_1,\ldots, x_N\}$ be a set of distinct points (unit vectors) on the unit sphere ${\mathbb S}^{d-1}$ in ${\mathbb R}^{d}$. Configurations that minimize the {\em logarithmic energy}
 
\begin{equation}\label{LogEn}
E_{\rm log} (X):= \sum_{1\leq i \not= j\leq N} \log
\frac{1}{|{x}_i-{ x}_j|}=-(1/2)\sum_{1\leq i \not=j\leq N} \log
(1-{x}_i\cdot{ x}_j) -\frac{N(N-1)\ln{2}}{2},
\end{equation}
are called {\em log-optimal}. More generally, a configuration is called {\em $h$-optimal} for a potential interaction $h:[-1,1)\to \mathbb{R}$, if it minimizes the {\em $h$-energy}
\begin{equation}\label{hEn}
E_{h} (X):= \sum_{1\leq i \not=j\leq N} h({x}_i\cdot{ x}_j).
\end{equation}
The {\em Newton potential} ($h(t)=(1-t)^{-d/2+1}$), and more generally the {\em Riesz potential} ($h(t)=(1-t)^{-s/2}$) , as well as the {\em Gaussian potential} ($h(t)=e^{\alpha t}$, $\alpha>0$) have been well studied in the literature (see \cite{SK}). The {\em logarithmic potential} $-\log (1-t)$ is the limiting case of the Riesz potential as $s\to 0$. All of these potentials are absolutely monotone potentials, i.e. $h^{(k)}(t)\geq 0$, for all $k=1,2,\dots$. The {\em regular simplex} ($N=d+1$) and the {\em cross polytope} ($N=2d$) are the only known classes of configurations that minimize the logarithmic energy for all $d$; actually, they are {\em universally optimal configurations}, namely they minimize the energy for all absolutely monotone potentials $h$ (see \cite[Table 1]{CK}). Another (infinite) class of universally optimal configurations is the so-called {\em isotropic spaces}, for which $d=q(q^2+q+1)$ and $N=(q+1)(q^3+1)$, where $q$ is a power of a prime number. All other known optimal configurations in the literature, even when the interacting potential $h$ is fixed, have particular values of the dimension $d$ and the cardinality $N$.

While the original problem of finding log-optimal configurations on the sphere, sometimes referred to as {\em Whyte's problem} (see \cite{W}), was posed in 1952, few advances have been made throughout the years. That the regular simplex is a log-optimal configuration follows from the classical arithmetic-geometric mean inequality. Kolushev and Yudin \cite{KY}, using analytic methods derived in 1997 that the cross-polytope (the $2d$ intersection points of the coordinate axes and the unit sphere) minimizes the logarithmic energy. In 1996 Andreev \cite{A} proved that the regular icosahedron is a log-optimal configuration. Subsequently, in 2007 Cohn and Kumar \cite{CK} showed all these to be universally optimal configurations (ones that minimize all absolutely monotone potentials). The first non-universally optimal case of $d+2$ points on $\mathbb{S}^{d-1}$ for $d=3$ was resolved in 2002 (see \cite{DLT}) and the cases $d=4$ and $d=5$ were derived in 2016 (see \cite{D}). 

Note that all partial results have been focused on finding the global minima. The goal of this article somewhat more general, namely to classify all local minima for the logarithmic energy for the class of $N=d+2$ points on  $\mathbb{S}^{d-1}$, $d\geq 2$, and in particular, determine the log-optimal energy configuration for this class. The following is our main theorem.

\begin{theorem}\label{thmLogOpt} Up to orthogonal transform, every local minimum of the logarithmic energy  $E_{\log} (X)$ of $d+2$ points on $\mathbb{S}^{d-1}$ consists of two regular simplexes of cardinality $m\geq n>1$, $m+n=d+2$, such that these simplexes are orthogonal to each other. The global minimum occurs when $m=n$ if $d$ is even and $m=n+1$ otherwise. 
\end{theorem}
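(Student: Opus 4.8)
\medskip
\noindent\textbf{Outline of the proof.}
Since $E_{\log}(X)\to+\infty$ whenever two points collide, every relative minimum is an interior stationary point of the energy on the configuration space $\{(x_1,\dots,x_{d+2})\in(\mathbb S^{d-1})^{d+2}:x_i\ne x_j\}$. Writing $u_{ij}:=1-x_i\cdot x_j\in(0,2]$, the stationarity (Euler--Lagrange) equations on the sphere read
\[
\sum_{j\ne i}\frac{x_j-(x_i\cdot x_j)\,x_i}{u_{ij}}=0,\qquad i=1,\dots,d+2 .
\]
The plan is to classify all stationary configurations and then to single out the relative minima. The rigidity comes entirely from $N=d+2$: the Gram matrix $G=(x_i\cdot x_j)$ is positive semidefinite of rank $\le d$, so $\dim\ker G\ge2$. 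I would fix two linear dependences $\sum_i\psi^{(a)}_i x_i=0$ ($a=1,2$) with $(\psi^{(1)},\psi^{(2)})$ orthonormal in $\mathbb R^{d+2}$, and record the planar ``Gale dual'' $\psi_i:=(\psi^{(1)}_i,\psi^{(2)}_i)\in\mathbb R^2$. At a point $G$ of rank $d$ the tangent space to $\{\operatorname{rank}\le d\}$ equals $\{M:\langle\psi^{(a)},M\psi^{(b)}\rangle=0,\ a,b=1,2\}$, so the Lagrange condition for the constrained energy problem becomes: there is a symmetric $2\times2$ matrix $A$ with $u_{ij}^{-1}=\langle A\psi_i,\psi_j\rangle$ for every $i\ne j$ (the constant from $h'$ absorbed into $A$).

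\smallskip
\noindent\emph{Classification (the crux).} Diagonalising $A$ and analysing this planar identity together with $\sum_i\psi^{(a)}_i x_i=0$ and the positivity $u_{ij}>0$, I would show that the points break into clusters carrying a common inner product inside each cluster and one relation across clusters; a rank count ($\dim\ker G\ge2$, with equality when $X$ spans $\mathbb R^d$) then forces every stationary logarithmic configuration to be, up to $O(d)$, an orthogonal union $\Delta_{m_1}\perp\dots\perp\Delta_{m_k}$ of regular simplices with $\sum_r m_r=d+2$, $m_r\ge2$ and $k\ge2$. The absolute monotonicity of the logarithmic potential is what excludes the stray sign patterns; and once the union-of-simplices shape is in place, orthogonality of the blocks is forced, because for $x_i$ in one block the numbers $x_i\cdot x_k$ over another block must coincide --- hence vanish --- for the Euler--Lagrange equation at $x_i$ to hold. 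The hard part of the whole argument will be this classification.

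\smallskip
\noindent\emph{From stationary to relative minimum.} If $k\ge3$, the configuration spans only $W:=\mathbb R^{\,d+2-k}\subsetneq\mathbb R^d$; I would pick a unit $e\perp W$ and perturb by $x_i(t)=\sqrt{1-t_i^2}\,x_i+t_i e$. Using $e\perp x_j$ (so $x_i(t)\cdot x_j(t)=\sqrt{(1-t_i^2)(1-t_j^2)}\,x_i\cdot x_j+t_it_j$) and the identity $\sum_{j\ne i}(x_i\cdot x_j)/u_{ij}=1/m_i-1$ valid on a union of simplices, a short second-order computation collapses to
\[
E_{\log}(X(t))-E_{\log}(X)=\tfrac12\Bigl(S^2-\sum_r S_r^2/m_r\Bigr)+O(|t|^4),\qquad S_r:=\!\!\sum_{i\in\Delta_{m_r}}\!\!t_i,\quad S:=\sum_r S_r .
\]
Choosing the $t_i$ with $S=0$ but not all $S_r=0$ makes this strictly negative, so no $k\ge3$ stationary configuration is a relative minimum. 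When $k=2$ the configuration $\Delta_m\perp\Delta_n$ ($m\ge n>1$, $m+n=d+2$) is full-dimensional, and one checks that it is a genuine relative minimum: perturbations inside $V_1$ (resp.\ $V_2$) keep all cross products equal to $0$ and cost the positive-definite Hessian of the regular simplex $\Delta_m$ (resp.\ $\Delta_n$), which is a locally optimal configuration in its own subspace, while tilting $V_1$ against $V_2$ raises the energy to second order as well. This settles the first assertion of the theorem.

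\smallskip
\noindent\emph{The global minimum.} On $\Delta_m\perp\Delta_n$ a direct computation gives $E_{\log}=-\tfrac12\bigl(g(m)+g(n)\bigr)-\tfrac{(d+2)(d+1)\ln 2}{2}$, where $g(\ell):=\ell(\ell-1)\log\tfrac{\ell}{\ell-1}$, so the global minimizer is the pair $(m,n)$ with $m+n=d+2$, $m,n\ge2$ that maximizes $g(m)+g(n)$. Since $g(\ell+1)-g(\ell)$ is decreasing in $\ell$ --- equivalently $g(a-1)+g(b+1)\ge g(a)+g(b)$ for integers $a\ge b+2$, which follows from the expansion $g(\ell)=(\ell-\tfrac12)-\tfrac1{6(\ell-1)}+O(\ell^{-2})$ together with the convexity of $\ell\mapsto 1/(\ell-1)$, completed by a finite check --- the maximum occurs at the most balanced admissible split: $m=n$ when $d$ is even and $m=n+1$ when $d$ is odd.
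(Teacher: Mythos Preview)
Your Gale-dual setup is sound and is essentially the paper's rank-one matrix argument in disguise, but the classification step has a real gap. The claim that every non-degenerate stationary configuration is an orthogonal union $\Delta_{m_1}\perp\cdots\perp\Delta_{m_k}$ with each $m_r\ge2$ is false: the configurations of type $\{1,k,l\}$ --- one apex equidistant from all others, the remaining $k+l=d+1$ points forming two orthogonal regular simplices on a lower parallel --- are genuine non-degenerate stationary points of $E_{\log}$, and in them the cross inner products are $-1/(d+1)$ and $1/(d+1)^2$, not zero. Your ``hence vanish'' step fails because the Euler--Lagrange equations only force a linear relation among the cluster centroids, not that each centroid sits at the origin; with a singleton cluster present that relation is satisfied with all centroids aligned on a common axis. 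The paper isolates this case explicitly (its Theorem~\ref{thmPyramid}) and shows by a one-parameter perturbation that every $\{1,k,l\}$ is a saddle. Likewise, your Lagrange identity $u_{ij}^{-1}=\langle A\psi_i,\psi_j\rangle$ is derived only at smooth points of $\{\operatorname{rank}\le d\}$, i.e.\ when $\operatorname{rank} G=d$; degenerate stationary configurations (for instance the regular pentagon on the equator of $\mathbb S^2$, when $d=3$) escape your classification entirely and are not of the asserted form. The paper disposes of all degenerate configurations by a separate convexity-based perturbation (its Theorem~\ref{thmDeg}).

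A second, smaller gap is the verification that $\Delta_m\perp\Delta_n$ is a genuine relative minimum. Splitting perturbations into ``inside $V_1$'', ``inside $V_2$'', and ``tilting $V_1$ against $V_2$'' does not obviously exhaust all directions, and in any case the second variation has a nontrivial null space (infinitesimal rotations and certain skew perturbations) on which one must pass to fourth order. The paper carries this out via two explicit matrix inequalities (Lemmas~\ref{MatrixIneq1} and~\ref{MatrixIneq2}) that control the full quadratic form $D=D_1+D_2+D_3$, followed by a quartic check on the null directions; your outline would need an argument of comparable detail here.
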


The theorem is derived following a careful analysis of non-degenerate stationary configurations. While inspired by \cite{D}, our approach in this article is new and allows us to establish much stronger necessary conditions for stationarity (see Theorems \ref{thmMain} and \ref{thmPyramid}). As pointed in Remark \ref{D}, that the number of orthogonal simplexes in Theorem \ref{thmMain} is two, a byproduct of Theorem \ref{thmA}. For degenerate stationary configurations, Theorem \ref{thmDeg} shows that the $h$-energy may be decreased whenever $h$ is strictly convex potential function, including in the logarithmic case.

Note that for $d$ even the log-optimal configuration in Theorem \ref{thmLogOpt} is a {\em two-distance set} (see \cite{Mu} and references therein) that is the {\em two-design} introduced by Mimura \cite{Mi}. We also draw the reader's attention to a remarkable connection with the classification of best packing configurations of $d+k$, $1\leq k\leq d$ points on $\mathbb{S}^{d-1}$ found by W. Kuperberg in \cite{K}. In particular, his classification implies that any best packing configurations of $d+2$ points will split into two orthogonal simplexes, not necessarily regular, but with minimal distance at least $\sqrt{2}$. It is easy to see that the local minima above minimize the logarithmic energy among such best packing configurations. Kuperberg-type theorems for two--distance sets are considered in \cite{Mu}. We finally point out the connection with Steven Smale's  $7^{\rm th}$ problem \cite{Sm} asking for generating in polynomial time nearly log-optimal configurations on $\mathbb{S}^2$ for large $N$.

In the next section we classify the stationary configurations and deal with the cases that don't lead to local minima. In Section 3 we introduce some auxiliary results utilized in Section 4 to prove  the results about stationary configurations that are saddle points. The proof of the main theorem is presented in Section 5. In Section 6  we derive the Morse index for all stationary configuration of five points on $\mathbb{S}^2$ and list some related open problems and future plans of research.

\section{Stationary Configurations of $d+2$ points on $\mathbb{S}^{d-1}$}

In this section we completely classify the stationary configurations of $d+2$ points on $\mathbb{S}^{d-1}$. We call a configuration $X$ {\em non-degenerate} if span$(X)=\mathbb{R}^d$ and {\em degenerate} otherwise. 

\begin{theorem} \label{thmMain} Let $N=d+2$ and $X=\{x_1,\ldots, x_N\}$ be a non-degenerate stationary logarithmic configuration on ${\mathbb S}^{d-1}$.  Suppose there is no point $x \in X$  that is equidistant to all other points in $X$. Then $X$ can be split into two sets such that these sets are vertices of two regular orthogonal simplexes with the centers of mass in the center of\, ${\mathbb S}^{d-1}$. 
\end{theorem}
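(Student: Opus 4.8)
\emph{Step 1: recasting stationarity.} The plan is to begin from the first-order conditions. Put $t_{ij}=x_i\cdot x_j$; since $|x_i-x_j|^2=2(1-t_{ij})$, a direct computation shows that $X$ is stationary if and only if the force on each point is radial, i.e. there are reals $\lambda_i$ with
\[
\sum_{j\neq i}\frac{x_j}{1-t_{ij}}=\lambda_i\,x_i\qquad(1\le i\le N).
\]
Let $G=(t_{ij})$ be the Gram matrix, let $M$ be the symmetric matrix with $M_{ij}=1/(1-t_{ij})$ for $i\neq j$ and $M_{ii}=0$, and let $\Lambda=\mathrm{diag}(\lambda_i)$. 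Taking dot products of the relations above with each $x_k$ and using $\mathrm{span}(X)=\mathbb R^{d}$ shows that they are equivalent to $(M-\Lambda)G=0$. Non-degeneracy gives $\rank G=d=N-2$, so $\ker G$ is two-dimensional; pick a basis of it as the columns of an $N\times2$ matrix $\Phi$. Since the rows of the symmetric matrix $M-\Lambda$ lie in $\ker G$, we get $M-\Lambda=\Phi R\Phi^{\top}$ for a symmetric $2\times2$ matrix $R$; writing $P_i\in\mathbb R^{2}$ for the $i$-th row of $\Phi$,
\[
\frac{1}{1-t_{ij}}=\langle P_i,P_j\rangle_R:=P_i^{\top}RP_j>0\qquad(i\neq j).
\]
A short separate argument rules out $\rank R\le1$ (the case $R=0$ is immediate, and $\rank R=1$ unwinds, using $\rank G=N-2$, to a regular $N$-simplex, which cannot live in $\mathbb R^{d}$); so $R$ is invertible.

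\emph{Step 2: all labels lie on a line, and $t_{ij}$ becomes a quadratic.} Computing $\lambda_i$ from the force equation ($\lambda_i=\sum_{j\neq i}t_{ij}/(1-t_{ij})$) and from the diagonal of $\Phi R\Phi^{\top}$ ($\lambda_i=-\langle P_i,P_i\rangle_R$) and equating yields the identity $\langle P_i,Q\rangle_R=N-1$ for all $i$, where $Q=\sum_j P_j\neq0$. Since $R$ is invertible, this forces all $P_i$ onto a single affine line $L\subset\mathbb R^{2}$. Parametrize $L$ as $P_i=A+s_iV$; after an affine reparametrization one may take $\langle A,V\rangle_R=0$, whence (using $\langle V,Q\rangle_R=0$) $\sum_i s_i=0$ and $\langle A,A\rangle_R=(N-1)/N=:a$. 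Writing $c:=\langle V,V\rangle_R$, which must be nonzero (otherwise all $t_{ij}$ would be equal), we reach
\[
\frac{1}{1-t_{ij}}=a+c\,s_is_j\qquad(i\neq j),
\]
so $t_{ij}$ depends only on the unordered pair $\{s_i,s_j\}$.

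\emph{Step 3: the quotient Gram matrix and the number of classes.} Group the points by the value of $s_i$, into $r$ classes of sizes $m_1,\dots,m_r$ with distinct values $v_1,\dots,v_r$. Then $G$ is block-patterned: the $\ell$-th diagonal block is $(1-\beta_\ell)I+\beta_\ell\mathbf 1\mathbf 1^{\top}$ with $\beta_\ell=1-1/(a+cv_\ell^{2})<1$, and the $(\ell,\ell')$ block is $\gamma_{\ell\ell'}\mathbf 1\mathbf 1^{\top}$ with $\gamma_{\ell\ell'}=1-1/(a+cv_\ell v_{\ell'})$. Since each $\beta_\ell<1$, every vector supported on one class and summing to zero is an eigenvector of $G$ with positive eigenvalue $1-\beta_\ell$; hence $G$ respects the splitting of $\mathbb R^{N}$ into class-constant and class-sum-zero vectors, and the two-dimensional $\ker G$ is class-constant. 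Therefore the $r\times r$ matrix $\widehat G$ with $\widehat G_{\ell\ell}=1+(m_\ell-1)\beta_\ell$ and $\widehat G_{\ell\ell'}=m_{\ell'}\gamma_{\ell\ell'}$ (conjugate to a positive semidefinite matrix by $\mathrm{diag}(m_\ell)$) has rank exactly $r-2$, with kernel $\mathrm{span}\{\mathbf 1,(v_\ell)_\ell\}$. Now $r=1$ gives a regular $N$-simplex, excluded. If $r=2$, then $\widehat G$ is $2\times2$ with two-dimensional kernel, so $\widehat G=0$: this forces $\gamma_{12}=0$ (the two classes are mutually orthogonal) and $\beta_\ell=-1/(m_\ell-1)$ (each class a regular simplex), and since class indicators lie in $\ker G=\{z:\sum_i z_ix_i=0\}$, each class has its centroid at the origin; moreover $m_1,m_2\ge2$, because a singleton class would be a point equidistant from all others, contrary to hypothesis. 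This is exactly the asserted structure.

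\emph{Step 4: excluding $r\ge3$ --- the main obstacle.} This is where I expect the real difficulty to lie. First, the hypothesis forces $v_\ell\neq0$ for every $\ell$: if $v_\ell=0$ then $\beta_\ell=\gamma_{\ell\ell'}$ for all $\ell'$, so each point of class $\ell$ is equidistant from the rest. Since $\sum m_\ell v_\ell=0$, the $v_\ell$ then have mixed signs. The plan is to combine three ingredients: (i) the relations $G\mathbf 1=0$, which say that $v_1,\dots,v_r$ are $r$ of the $r+2$ roots of an explicit polynomial built from the $m_\ell$, $a$ and $c$; (ii) the fact that the off-diagonal data of $\widehat G$ comes from the Cauchy-type matrix $\bar K=\bigl(1/(a+cv_\ell v_{\ell'})\bigr)$, which is invertible and whose inertia is pinned down, through the Cauchy-determinant (Jacobi) formula, by the signs of the $v_\ell$ and of $c$; and (iii) $G\succeq0$, which forces $\bar K$ to have at least two positive eigenvalues, while $\rank\widehat G=r-2$ constrains its negative eigenvalues. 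I expect that feeding (i) into (ii)--(iii) makes the system for $r\ge3$ overdetermined and shows it collapses --- forcing two of the $v_\ell$ to coincide, or a class to be a singleton, either of which contradicts what is already known. The remaining parts of the argument (Steps 1--3 and the $r=2$ conclusion) are routine once Step 4 is in hand.
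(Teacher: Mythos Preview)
Your Steps 1--3 are correct and, despite the different packaging via the two-dimensional kernel $\Phi R\Phi^\top$, land at exactly the same place as the paper's Lemmas~3.1--3.2: writing $a=(N-1)/N$ you get $1/r_{ij}=a+cs_is_j$ with $\sum_i s_i=0$, which is the paper's $b_{ij}=c-a_ia_j$ after renaming. The $r=2$ analysis in Step~3 is also fine.

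The genuine gap is Step~4. You acknowledge that excluding $r\ge3$ is ``the main obstacle'' and offer only a plan based on Cauchy-matrix inertia and root-counting, but none of it is actually carried out; in particular you never establish the overdeterminedness you ``expect.'' The inertia heuristic is fragile here because positivity of $G$ does not transfer cleanly to your quotient $\widehat G$ once you mix the diagonal corrections $1+(m_\ell-1)\beta_\ell$ into the Cauchy pattern, and the polynomial in (i) is of degree $r+2$ rather than $r$, so getting a contradiction from ``too many roots'' is not automatic.

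The paper closes this gap with a short convexity argument you are missing. From the stationarity identity $\sum_{j\ne i}1/(a+cs_is_j)=\sum_{j\ne i} r_{ij}=N$ one derives (after two lines of algebra) that
\[
T_i:=\sum_{j\ne i}\frac{a-cs_j^{2}}{a+cs_is_j}=N-2\qquad(i=1,\dots,N),
\]
and hence that the one-variable function
\[
F(t):=\sum_{j=1}^{N}\frac{a-cs_j^{2}}{a+cts_j}
\]
satisfies $F(s_i)=N-1$ for every $i$. A separate estimate shows $cs_j^{2}<a$ for all $j$, so every summand in $F''(t)$ is positive on the interval containing all $s_i$; thus $F$ is strictly convex there and $F(t)=N-1$ has at most two solutions. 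Since the $s_i$ are nonzero of both signs, there is exactly one positive and one negative value, i.e.\ $r=2$. This replaces your entire Step~4 with a clean, self-contained argument; once you have the identity $1/r_{ij}=a+cs_is_j$ and $\sum s_i=0$ from your Step~2, the convexity trick finishes the proof immediately.
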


\begin{remark} \label{D} This theorem strengthens significantly the characterization theorem \cite[Theorem 1.5]{D}, which asserts that a stationary configuration is either degenerate; has a vertex equidistant to all others; or that every vertex has a mirror related partner, i.e. another vertex, such that the perpendicular bisector hyperplane of the segment formed by the two vertices contains all other points of the configuration. The mirror relation as an equivalence relation splits the points in a non-degenerate stationary configuration that has no vertex equidistant to all other vertices into equivalence classes that form regular simplexes. Theorem \ref{thmMain} states that these simplexes are only two. This along, together with \cite[Lemma 3.2]{D} implies the global minimum part of Theorem \ref{thmLogOpt}.
\end{remark}

In the process of classifying all local minima for the energy, we need to eliminate the other cases. We first consider degenerate stationary configurations. While there are such configurations that are global minimizers of energy among all configurations confined to their spanning subspace  (say a regular pentagon on the Equator of $\mathbb{S}^2$), the next theorem (a generalization of \cite[Theorem 1.6]{D}) shows that for any strictly convex potential function $h$, the $h$-energy (see \eqref{hEn}) of a degenerate configuration with cardinality $N\geq d+2$ can be strictly decreased by a small perturbation, and hence may not be a local minimum.

\begin{theorem}\label{thmDeg}
Let $X$ be a degenerate configuration, $N\geq d+2$, and $h : [-1,1]\to \mathbb{R}$ be a strictly convex potential function. Then there exists a continuous perturbation that decreases the $h$-energy $E_h(X)$.
\end{theorem}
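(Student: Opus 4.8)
The plan is to exploit the fact that a degenerate configuration lives in a proper subspace $V = \operatorname{span}(X) \subsetneq \mathbb{R}^d$, so there is at least one direction $v \in \mathbb{R}^d$ with $v \perp V$, and to use the extra room in that direction to perturb pairs of points symmetrically off the hyperplane $V$. First I would set $k = \dim V \le d-1$, so that $N \ge d+2 \ge k+3$; write each $x_i \in V$, and pick a unit vector $v$ orthogonal to $V$. For a parameter $t \in [0, t_0)$ and a choice of signs $\varepsilon_i \in \{+1,-1\}$, consider the perturbed points $x_i(t) = \sqrt{1-t^2}\,x_i + t\,\varepsilon_i v$, which remain on $\mathbb{S}^{d-1}$. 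Then $x_i(t)\cdot x_j(t) = (1-t^2)\,x_i\cdot x_j + t^2\varepsilon_i\varepsilon_j$, so each inner product changes by $t^2(\varepsilon_i\varepsilon_j - x_i\cdot x_j)$ to first order in $t^2$. Setting $s = t^2$ and $g(s) = E_h(X(t))$, I would compute $g'(0) = \sum_{i\ne j} h'(x_i\cdot x_j)\,(\varepsilon_i\varepsilon_j - x_i\cdot x_j)$ and seek a sign assignment making this negative.

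The key combinatorial step is choosing the signs. Since $N \ge k+3 \ge 3$, there are at least two independent "sign patterns" available; more concretely, averaging over all $2^N$ sign vectors $\varepsilon$ kills every cross term $\varepsilon_i\varepsilon_j$ with $i\ne j$, so the average of $g'(0)$ over all sign choices equals $-\sum_{i\ne j} h'(x_i\cdot x_j)\,x_i\cdot x_j$. I would argue this average is $\le 0$: using $\sum_i x_i(t) \cdot$ itself or a second-variation bookkeeping, one shows $\sum_{i\ne j} x_i\cdot x_j = |\sum_i x_i|^2 - N$, and more relevantly, for a stationary (or even arbitrary) degenerate configuration the quantity $\sum_{i\ne j} h'(x_i\cdot x_j)\, x_i\cdot x_j$ is nonnegative because $h'\ge 0$ would not alone suffice — instead I would split $\sum_{i\ne j} h'(x_i\cdot x_j)(\varepsilon_i\varepsilon_j - x_i\cdot x_j)$ and observe that at least one sign choice does at least as well as the average, hence $g'(0)\le -\sum_{i\ne j}h'(x_i\cdot x_j)\,x_i\cdot x_j$ for some $\varepsilon$. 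If this bound is strictly negative we are done by taking $t$ small; the remaining borderline case is when $\sum_{i\ne j} h'(x_i\cdot x_j)\, x_i\cdot x_j \le 0$, which forces many inner products to be negative, and there I would instead use a non-symmetric perturbation (moving a single cluster of points) together with strict convexity of $h$ to extract a strict decrease from the second-order term $g''(0)$, which involves $h''>0$.

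The main obstacle I anticipate is precisely this borderline analysis: guaranteeing that \emph{some} sign pattern — not just the average — yields a strict decrease, and handling the degenerate sub-case where the first-order term vanishes for every sign choice. To resolve it cleanly I would likely not argue through stationarity at all, but observe that if $g'(0)=0$ for all sign assignments then all pairwise inner products $x_i\cdot x_j$ with $h'(x_i\cdot x_j)\ne 0$ must be constrained so tightly (e.g. the weighted Gram structure collapses) that $X$ cannot in fact span a space of dimension $k$ with $N\ge k+3$ points — a contradiction with the packing/rank count. Alternatively, invoke that a degenerate configuration of $N\ge d+2$ points, viewed inside its own span $V\cong\mathbb{R}^k$, still has $N\ge k+2$, so by an inductive or dimension-shifting argument one reduces to the non-degenerate-in-$V$ case and then lifts; the strict convexity of $h$ enters to make the lifting perturbation strictly energy-decreasing rather than merely non-increasing. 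I expect the write-up to proceed by: (i) fixing $v\perp V$ and the family $x_i(t)$; (ii) computing $g'(0)$ and $g''(0)$ in terms of $h',h''$; (iii) an averaging argument over signs to locate a good $\varepsilon$; (iv) a separate treatment of the degenerate first-order case using $h''>0$; and (v) concluding that $E_h$ strictly decreases along $t\mapsto X(t)$ for small $t>0$.
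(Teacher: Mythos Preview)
Your approach has a genuine gap. The theorem assumes only that $h$ is strictly convex --- there is no monotonicity hypothesis --- so $h'$ can have either sign on $[-1,1]$. Your averaging argument computes the average of $g'(0)$ over all sign patterns as $-\sum_{i\ne j} h'(x_i\cdot x_j)\,x_i\cdot x_j$, and you need this to be nonpositive (indeed negative) to find a good pattern. But this quantity is not controlled by convexity alone: for instance, with $h(t)=t^2-10t$ (strictly convex) one has $h'(t)=2t-10$, and on a configuration where most inner products are small and positive the average of $g'(0)$ is strictly positive. In that regime your first-order argument locates no energy-decreasing sign pattern, and your ``borderline'' discussion (forcing many negative inner products, or appealing to $h''>0$ at second order) is too vague to close the gap --- and strict convexity does not even require $h$ to be twice differentiable.

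The paper's proof avoids derivatives entirely and uses convexity in the sharpest possible way. Since $N\ge d+2$, the points cannot be mutually equidistant, so one finds $x_1,x_2,x_3$ with $x_1\cdot x_3\neq x_2\cdot x_3$; after a suitable choice of coordinates, $x_1$ and $x_2$ are symmetric across an axis. One then rotates \emph{only} $x_1$ and $x_2$ by an angle $\theta$ into the unused last coordinate, in opposite directions. This keeps $\tilde x_1\cdot\tilde x_2$ constant, and for each $j\ge 3$ it keeps the midpoint $\tfrac12(\tilde x_1\cdot x_j+\tilde x_2\cdot x_j)$ fixed while shrinking the spread $|\tilde x_1\cdot x_j-\tilde x_2\cdot x_j|$ by the factor $\cos\theta$. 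A direct Jensen-type lemma (for strictly convex $h$, the map $t\mapsto h(a+bt)+h(a-bt)$ is strictly increasing on $[0,1]$ when $b\ne0$) then gives $h(\tilde x_1\cdot x_j)+h(\tilde x_2\cdot x_j)< h(x_1\cdot x_j)+h(x_2\cdot x_j)$ for every $j$ with nonzero second coordinate, strictly for $j=3$. No first- or second-order expansion is needed, and no sign information on $h'$ is used.
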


Next, we focus on configurations that are not degenerate, but have a vertex, say the North Pole $x_N$, that is equidistant to all other vertices $x_j$. We shall denote such configurations with $\{ 1,N-1 \}$. Then the vertices $\{ x_1,\dots, x_{N-1} \}$ are lying on a hyperplane in the Southern hyper-hemisphere at height $-1/(N-1)$. By projecting these vertices to the Equatorial hyperplane and normalizing to become unit vectors, we reduce the configuration to $d+1$ points on $S^{d-2}$ that form a non-degenerate stationary (w.r.t logarithmic energy) configuration. This configuration may have a vertex that is equidistant to all others, we shall denote such a case as $\{1,1,N-2\}$. As for $4$ points on $\mathbb{S}^1$ the only stationary configuration is the two orthogonal simplexes split (diagonals of a square), this process will stop with two orthogonal simplexes case. The following theorem sheds light on this case.

\begin{theorem}\label{thmPyramid} A non-degenerate stationary log-energy configuration of type $\{ 1,1,\dots,k,l \}$, where $1+1+\dots+k+l=d+2$ is a saddle point. Moreover, there is a continuous perturbation that decreases the logarithmic energy of the $\{1,k,l\}$ part of the configuration to either $\{k+1,l\}$ or $\{k,l+1\}$. Subsequently, with a sequence of such perturbations, one can reach a local minimum as described in Theorem \ref{thmLogOpt}. 
\end{theorem}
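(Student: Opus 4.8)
The plan is to analyze a configuration of type $\{1,1,\dots,k,l\}$ by peeling off the iterated ``pole'' structure and reducing to a lower-dimensional problem, then to exhibit an explicit descending perturbation using the building blocks already developed. First I would make precise the recursive structure: a configuration of type $\{1,1,\dots,k,l\}$ has a vertex $x_N$ (the North Pole) equidistant to all others, so the remaining $N-1$ vertices lie on a hyperplane at height $-1/(N-1)$; projecting and renormalizing, as described before the statement, yields a non-degenerate stationary configuration of $d+1$ points on $\mathbb{S}^{d-2}$ of type $\{1,\dots,k,l\}$, and we iterate until we are left with the base object, which is the orthogonal pair of regular simplexes of sizes $k$ and $l$ sitting in $\mathbb{R}^{k+l-1}$. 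Along the way I would track how the logarithmic energy decomposes: the total energy is a sum of the interaction within the bottom $N-1$ points (which, after the height rescaling $|x_i - x_j|^2 = \frac{N-2}{N-1}\,|y_i-y_j|^2$, is an affine function of the reduced energy $E_{\log}$ of the projected configuration) plus the contribution of the pole, which is a constant once the height is fixed. This shows that decreasing the reduced energy strictly decreases the total energy.

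Second, I would establish the saddle-point claim. Since Theorem~\ref{thmMain} classifies \emph{all} non-degenerate stationary configurations without an equidistant vertex as single orthogonal pairs of regular simplexes of sizes $m\geq n>1$, any configuration of type $\{1,1,\dots,k,l\}$ with at least one leading $1$ is \emph{not} of that form, hence cannot be a relative minimum by the classification; to get the stronger statement that it is a genuine saddle (not merely a non-minimum) I would produce the descending direction explicitly. The key sub-step is the $\{1,k,l\}\to\{k+1,l\}$ or $\{1,k,l\}\to\{k,l+1\}$ move: here the pole $x$ together with one of the two simplexes (say the one of size $k$, which is a regular $k$-simplex lying in a hyperplane below $x$) can be merged into a single regular $(k+1)$-simplex by sliding $x$ down and pushing the $k$-simplex up along a one-parameter family, keeping the $l$-simplex fixed and orthogonal. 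I would check that at the stationary configuration the derivative of $E_{\log}$ along this family vanishes (it must, since the configuration is stationary) and that the second derivative is negative — equivalently, that the merged configuration $\{k+1,l\}$ has strictly smaller energy than $\{1,k,l\}$, which reduces to a one-variable calculus inequality about how the logarithmic energy of ``a $j$-simplex plus an orthogonal $(d+2-j)$-simplex'' depends on $j$; this is exactly the content of \cite[Lemma 3.2]{D}, which I would invoke. Comparing $E_{\log}$ of $\{1,k,l\}$, $\{k+1,l\}$ and $\{k,l+1\}$ and picking whichever merge decreases energy gives the descending perturbation.

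Third, for the ``subsequently'' clause I would iterate the merge: each move strictly decreases $E_{\log}$ and reduces the number of blocks in the partition by one, so after finitely many moves we reach a configuration of the form $\{m,n\}$ — two orthogonal regular simplexes. By \cite[Lemma 3.2]{D} (monotonicity in the split), continuing to perturb lets us reach the balanced split $m=n$ (for $d$ even) or $m=n+1$ (for $d$ odd), which is the relative minimum of Theorem~\ref{thmLogOpt}. One has to be slightly careful that the intermediate configurations $\{m,n\}$ with the split far from balanced are themselves saddle points along the ``mass-transfer between the two simplexes'' direction — again a consequence of \cite[Lemma 3.2]{D} — so that the energy genuinely keeps descending.

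The main obstacle will be the explicit descent inequality for the single merge $\{1,k,l\}\to\{k+1,l\}$: one must show not just that the merged configuration has lower energy, but that there is a \emph{continuous} energy-nonincreasing path realizing the perturbation, i.e. that the one-parameter family connecting the configurations stays on $\mathbb{S}^{d-1}$ and has energy decreasing (or at least non-increasing with a strict drop somewhere) throughout. The cleanest route is to parametrize the family by the common height of the $k$-simplex (equivalently the polar angle of $x$), write $E_{\log}$ as an explicit elementary function of that single parameter using the inner-product structure of a regular simplex, and verify by differentiation that the stationary configuration of type $\{1,k,l\}$ is a strict local max of this function while the endpoint $\{k+1,l\}$ lies strictly below it. This calculation is elementary but must be done carefully to handle all $k,l\geq 1$ uniformly; I expect it to parallel, and partly reuse, the one-parameter analyses already carried out in Sections 3–4 for the saddle-point results.
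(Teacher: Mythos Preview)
Your overall plan---reduce to the $\{1,k,l\}$ case and then study a one-parameter family explicitly, computing the energy as an elementary function and differentiating---is exactly what the paper does. Two specific points need correction.

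First, the parametrization. You propose to fix the $l$-simplex and move the pole and the $k$-simplex toward each other; the paper instead keeps the pole $p=(0,0,1)$ fixed and slides \emph{both} simplexes, the $k$-simplex to height $-\tfrac{1}{k+l}-lt$ and the $l$-simplex to height $-\tfrac{1}{k+l}+kt$. This choice is cleaner because the resulting $f(t)=E_{\log}(\widetilde X_t)$ has a derivative that factors completely, with
\[
\mathrm{sign}\,f'(t)=\mathrm{sign}\!\left(t\Bigl(lt+\tfrac{1}{k+l}\Bigr)\Bigl(kt-\tfrac{1}{k+l}\Bigr)\right),
\]
so $f$ is strictly increasing on $\bigl[-\tfrac{1}{l(k+l)},0\bigr]$ and strictly decreasing on $\bigl[0,\tfrac{1}{k(k+l)}\bigr]$, the two endpoints being precisely the $\{k,l+1\}$ and $\{k+1,l\}$ configurations. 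No second-derivative test is needed, and \cite[Lemma~3.2]{D} is not invoked here---nor does it apply, since that lemma compares two-block splits $\{m,n\}$, not $\{1,k,l\}$ against $\{k+1,l\}$.

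Second, your handling of the ``subsequently'' clause contains a genuine error. You assert that the unbalanced two-block configurations $\{m,n\}$ are themselves saddle points along a mass-transfer direction, so that the descent can be continued all the way to the balanced split. This contradicts Theorem~\ref{thmLogOpt}, which says that \emph{every} $\{m,n\}$ with $m\ge n>1$ is a relative minimum. The statement only claims that the descent reaches \emph{some} relative minimum, and indeed the paper's perturbation terminates at whichever of $\{k+1,l\}$ or $\{k,l+1\}$ the one-parameter family hits; there is no further continuous energy-decreasing path on the sphere from there.
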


\section{Auxiliary results}

Utilizing Lagrange multipliers to the constrained minimization of \eqref{LogEn} we show that for any {\em stationary configuration} $X$ the following vector equations (also referred to in the literature as {\em force equations}) hold true
$$
\sum_{j\not= i} \frac{{ x}_i- { x}_j}{r_{i,j}} = \lambda_i
{ x}_i\ \ \ i=1,\dots, N,
$$
where $ r_{ij}:=1-x_i\cdot x_j$. Taking inner product of both sides with $x_i$ one obtains $\lambda_i=N-1$, $i=1,\dots,N$.  Therefore, 
\begin{equation}
\label{VecEqs} 
\sum_{j\not= i} \frac{{ x}_i- { x}_j}{r_{i,j}} = (N-1)\,{ x}_i,\ \ \ i=1,\dots, N.
\end{equation}

Summing \eqref{VecEqs} implies that the centroid of a stationary configuration $X$ lies at the origin and that for all $i=1,\ldots,N$ we have 
\begin{equation}\label{SumDist}
\sum\limits_{j}{r_{ij}}=N. 
\end{equation}
  Let 
 $$
B=\left(b_{ij}\right),\quad  b_{ij}:=\frac{1}{ r_{ij}}, \quad b_{ii}:=N-1- \sum\limits_{j\ne i}{ b_{ij}},
 $$
  $$
A=\left(a_{ij}\right), \, \mbox{ where }  a_{ij}:=c-b_{ij}, \quad c:=\frac{N-1}{N}. 
 $$


 \begin{lemma} \label{lemma21} Let  $X=\{x_1,\ldots, x_N\}$ be a  stationary logarithmic configuration on  ${\mathbb S}^{d-1}$ that is non-degenerate $($span$(X)=\mathbb{R}^d)$. Then  
 $$
\rank(A)\leq N-d-1, \qquad \sum\limits_{j=1}^N{a_{ij}}=0, \quad i=1,\ldots,N. 
 $$
\end{lemma}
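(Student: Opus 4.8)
The plan is to extract the two asserted facts about $A$ directly from the stationarity relations \eqref{VecEqs} and \eqref{SumDist}, and then to read off the rank bound from a clever matrix factorization. First I would verify the row-sum condition $\sum_j a_{ij}=0$. By definition $a_{ij}=c-b_{ij}$ with $c=(N-1)/N$, so $\sum_{j=1}^N a_{ij}=Nc-\sum_j b_{ij}=(N-1)-\sum_j b_{ij}$. Now $\sum_{j\ne i} b_{ij}=\sum_{j\ne i} 1/r_{ij}$ and, by the definition of the diagonal entry, $b_{ii}=N-1-\sum_{j\ne i}b_{ij}$; hence $\sum_{j=1}^N b_{ij}=N-1$ for every $i$, which gives $\sum_j a_{ij}=0$ as claimed. (Here \eqref{SumDist} is the companion identity $\sum_j r_{ij}=N$; it will be used below, not for the row-sum claim itself.)

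The heart of the matter is the rank bound. I would rewrite the force equations \eqref{VecEqs} in matrix form. Let $G=(x_i\cdot x_j)$ be the Gram matrix of $X$, so $\rank(G)=d$ since $X$ is non-degenerate, and let $M$ be the $N\times N$ matrix with off-diagonal entries $b_{ij}=1/r_{ij}$ and the diagonal entries $b_{ii}$ defined above; note $M=B$. Taking the inner product of \eqref{VecEqs} with $x_k$ for each $k$, and using $x_i\cdot x_i=1$, one finds $\sum_{j\ne i}\frac{1-x_j\cdot x_k}{r_{ij}}=(N-1)(x_i\cdot x_k)$ when $i\ne k$, and sorting out the diagonal term $i=k$ via the definition of $b_{ii}$, these relations assemble into the single identity $B\,G = (N-1)\,I$ — or, more precisely, into $B$ annihilating the appropriate combination of $G$ and the all-ones structure. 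The cleanest route is: the vector identity \eqref{VecEqs} says $(N-1)x_i = \sum_j b_{ij}(x_i - x_j) = \big(\sum_j b_{ij}\big) x_i - \sum_j b_{ij} x_j$, and since $\sum_j b_{ij}=N-1$ this is exactly $\sum_j b_{ij} x_j = 0$ for each $i$. Writing the points as the columns of a $d\times N$ matrix $U$, this reads $U B^{\!\top} = 0$, i.e. every column of $B$ (equivalently every row, by symmetry) lies in the kernel of $U$; since $\rank(U)=d$, we get $\rank(B)\le N-d$. Moreover $B$ has row sums $N-1$, so the all-ones vector $\mathbf 1$ satisfies $B\mathbf 1 = (N-1)\mathbf 1$, and $A = c\,\mathbf 1\mathbf 1^{\!\top} - B$; I would then check that $A\mathbf 1 = 0$ (already shown) and that $A$ and $B$ share the same image restricted to the hyperplane $\mathbf 1^\perp$, so that $\rank(A)\le \rank(B)-1 \le N-d-1$. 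Making this last reduction precise — that passing from $B$ to $A$ drops the rank by exactly one because the eigenvector $\mathbf 1$ of $B$ for eigenvalue $N-1$ is sent into the kernel — is the step I expect to require the most care.

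The main obstacle, then, is bookkeeping rather than depth: one must correctly recover $\sum_j b_{ij}x_j=0$ from \eqref{VecEqs} (this is where the computation $\lambda_i=N-1$ and the definition of $b_{ii}$ enter crucially), and then argue cleanly that the rank-one modification $A=c\,\mathbf 1\mathbf 1^{\!\top}-B$ loses one unit of rank. A safe way to handle the latter is to note $U\mathbf 1 = $ the centroid (times $N$), which vanishes for a stationary configuration by the summation of \eqref{VecEqs}; hence $\mathbf 1 \in \ker U$ too, so $\ker U \supseteq \operatorname{span}(\mathbf 1) + \operatorname{col}(B)$ is at least $(N-d)$-dimensional containing $\mathbf 1$, and since $A$ kills $\mathbf 1$ and agrees with $-B$ on a complement, $\operatorname{col}(A)\subseteq \operatorname{col}(B)\cap \mathbf 1^\perp$, which has dimension at most $N-d-1$. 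I would present the argument in that order: row-sum identity first, then $UB^{\!\top}=0$, then the centroid remark, then the rank count.
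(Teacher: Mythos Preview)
Your proposal is correct and uses the same ingredients as the paper: from the force equations you extract $\sum_j b_{ij}x_j=0$ (i.e.\ $B{\rm X}=0$ with ${\rm X}=[x_1,\dots,x_N]^T$) and $B\mathbf 1=(N-1)\mathbf 1$, together with the centroid condition $\mathbf 1^T{\rm X}=0$. The only difference is packaging: you first bound $\rank(B)\le N-d$ and then argue that the rank-one shift $A=c\,\mathbf 1\mathbf 1^{\!\top}-B$ drops the rank by one via $\operatorname{col}(A)\subseteq\operatorname{col}(B)\cap\mathbf 1^\perp$, whereas the paper bypasses $B$ entirely by observing directly that $A[{\rm X},\mathbf 1]=0$ (since $A{\rm X}=c\,\mathbf 1(\mathbf 1^T{\rm X})-B{\rm X}=0$ and $A\mathbf 1=0$), and then invokes rank--nullity once, using that $\mathbf 1$ is linearly independent of the columns of ${\rm X}$ because it is a $B$-eigenvector for a nonzero eigenvalue. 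Your route is sound but the Gram-matrix digression and the ``$\ker U\supseteq\operatorname{span}(\mathbf 1)+\operatorname{col}(B)$'' remark are unnecessary (indeed $\mathbf 1\in\operatorname{col}(B)$ already), so in the write-up you can trim those and, if you wish, adopt the paper's one-step rank--nullity argument.
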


\begin{proof} Let ${\rm X}:=[x_1,\dots,x_N]^T$. The force equations (3) imply that 
$$
\sum\limits_{j=1}^N{b_{ij}x_j}=0, \quad  \sum\limits_{j=1}^N{b_{ij}}=N-1.
$$
In other words, $B{\rm X}=0$ and $B{\bf 1}=(N-1){\bf 1}$, where $\bf 1 $ denotes the $N$-dimensional column-vector of ones. As $X$ is non-degenerate, we have rank ${\rm X}=d$. Therefore, the column-vectors of ${\rm X}$ are linearly independent. As $\bf 1$ is eigenvector of $B$ with an eigenvalue of $N-1$ it is linearly independent to the columns of  ${\rm X}$ (eigenvectors with eigenvalue $0$). The lemma follows from the rank-nullity theorem applied to $A[{\rm X},{\bf 1}]=0$.
\end{proof}

The following lemma elaborates on the case when $N=d+2$.

\begin{lemma}\label{lm2} Let $N=d+2$ and $X=\{x_1,\ldots, x_N\}$ be a non-degenerate stationary logarithmic configuration on ${\mathbb S}^{d-1}$.  Without loss of generality we may assume that $a_{1i}\ge0$ for $i=1,\ldots k$ and $a_{1i}<0$ for $i=k+1,\ldots N$. Let 
$$
 a_i=\sqrt{a_{ii}}, \, i=1,\ldots k; \; a_i=-\sqrt{a_{ii}}, \, i=k+1,\ldots N. 
$$
Then 
$$
a_{ij}=a_i\,a_j, \quad a_1+\ldots+a_N=0, 
$$
$$
c-a_ia_j\ge \frac{1}{2}, \; \mbox{ for all } \: i\ne j,
$$
\begin{equation} \label{req}
 \sum\limits_{j\ne i} {\frac{1}{c-a_ia_j}}=N, \; i=1,\ldots,N. 
\end{equation}
\end{lemma}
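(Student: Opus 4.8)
The plan is to extract from Lemma~\ref{lemma21} the complete structure of the matrix $A=(a_{ij})$ in the case $N=d+2$. That lemma tells us $A$ is symmetric with $\rank(A)\le N-d-1=1$ and with all row sums equal to $0$; the crux is to strengthen this to the statement that $A$ is positive semidefinite of rank \emph{exactly} one. Once that is known, every assertion of the lemma drops out by substituting the definitions $b_{ij}=1/r_{ij}$ and $a_{ij}=c-b_{ij}$.

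To pin down the sign of the unique (possibly nonzero) eigenvalue of $A$, I would compute its trace. From $a_{ii}=c-b_{ii}$, $b_{ii}=N-1-\sum_{j\ne i}b_{ij}$ and $Nc=N-1$ one obtains $\operatorname{tr}(A)=2\sum_{i<j}b_{ij}-(N-1)^2=2\sum_{i<j}1/r_{ij}-(N-1)^2$. On the other hand, \eqref{SumDist} gives $\sum_{i<j}r_{ij}=N^2/2$, so Cauchy--Schwarz in the form $\bigl(\sum_{i<j}r_{ij}\bigr)\bigl(\sum_{i<j}1/r_{ij}\bigr)\ge\binom{N}{2}^2$ yields $\sum_{i<j}1/r_{ij}\ge(N-1)^2/2$, whence $\operatorname{tr}(A)\ge0$. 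Equality would force all the $r_{ij}$ to coincide, i.e.\ $X$ to be a regular simplex; but $d+2$ equidistant points cannot lie on $\mathbb{S}^{d-1}$, since they span an affine subspace of dimension $d+1$ (equivalently, their Gram matrix would have rank $d+1>d$). Hence $\operatorname{tr}(A)>0$, and together with $\rank(A)\le1$ this means $A=\mathbf{a}\,\mathbf{a}^{T}$ for a real vector $\mathbf{a}=(a_1,\dots,a_N)$, unique up to a global sign.

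Given this factorization the remaining claims are bookkeeping. The identity $a_{ij}=a_ia_j$ is just the $(i,j)$ entry of $A=\mathbf{a}\,\mathbf{a}^T$. After relabelling I may assume $a_1\ne0$ (some $a_i\ne0$ because $A\ne0$), and after fixing the global sign so that $a_1>0$ I may order the points so that $a_1,\dots,a_k\ge0$ while $a_{k+1},\dots,a_N<0$; since $a_{1i}=a_1a_i$ this is precisely the sign pattern of $\{a_{1i}\}$ in the statement, and $a_i=\sqrt{a_{ii}}$ for $i\le k$, $a_i=-\sqrt{a_{ii}}$ for $i>k$, because $a_{ii}=a_i^2$. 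The row-sum condition $\sum_j a_{ij}=0$ reads $a_i(a_1+\dots+a_N)=0$ for every $i$; choosing $i$ with $a_i\ne0$ gives $a_1+\dots+a_N=0$. Finally, for $i\ne j$ we have $c-a_ia_j=c-a_{ij}=b_{ij}=1/r_{ij}\ge1/2$ because $r_{ij}=1-x_i\cdot x_j\le2$, and $\sum_{j\ne i}1/(c-a_ia_j)=\sum_{j\ne i}r_{ij}=N$ by \eqref{SumDist} (using $r_{ii}=0$).

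The only genuinely nontrivial step is the positivity $\operatorname{tr}(A)>0$: without it $A$ could a priori be negative semidefinite, and then the square roots $\sqrt{a_{ii}}$ defining $\mathbf{a}$ would be meaningless. I expect that to be the main obstacle; everything else is a routine unwinding of the rank-one structure handed to us by Lemma~\ref{lemma21}.
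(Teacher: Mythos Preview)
Your proof is correct and follows the same overall route as the paper: use Lemma~\ref{lemma21} to get $\rank(A)\le 1$, rule out $\rank(A)=0$ because $d+2$ equidistant points cannot sit on $\mathbb{S}^{d-1}$, factor the rank-one symmetric matrix $A$, and then read off the four conclusions directly from the definitions of $a_{ij}$, $b_{ij}$, $r_{ij}$ together with \eqref{SumDist}.

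The one place where you go beyond the paper is in verifying that the unique nonzero eigenvalue of $A$ is \emph{positive}, so that $a_{ii}\ge0$ and the square roots defining the $a_i$ make sense. The paper's proof simply asserts ``Since $A$ is a symmetric matrix of rank $1$, $a_{ij}=a_ia_j$,'' tacitly taking $A$ to be positive semidefinite; you instead compute $\operatorname{tr}(A)=2\sum_{i<j}1/r_{ij}-(N-1)^2$ and show this is strictly positive via Cauchy--Schwarz and $\sum_{i<j}r_{ij}=N^2/2$, with equality again ruled out by the impossibility of a regular $(d+2)$-simplex on $\mathbb{S}^{d-1}$. That extra step is a genuine (and necessary) sharpening of the paper's argument; everything else matches.
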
 
\begin{proof} We first observe that if $N=d+2$, then $\rank(A)=1$. Indeed, $\rank (A)=0$ yields that all mutual distances are equal, which is impossible. 

Since $A$ is a symmetric matrix of rank 1, $a_{ij}=a_i\,a_j$ for all $i, j$. Lemma \ref{lemma21} implies that  for all $i$ we have 
$$\sum\limits_j{a_{ij}}=a_i(a_1+\ldots+a_N)=0.$$ 
Since all $a_i$ cannot be $0$, we have $a_1+\ldots+a_N=0.$

By definitions we have $a_{ij}=c-1/r_{ij}$, i.e. 
$$
r_{ij}=\frac{1}{c-a_{ij}}=\frac{1}{c-a_ia_j}, \; i\ne j. 
$$
Since $r_{ij}\le 2$, we have 
$$
c-a_ia_j\ge \frac{1}{2}. 
$$
It is easy to see that \eqref{SumDist} implies \eqref{req}. 
\end{proof}

Note that if $a_i=0$ then the $i$-th row and  $i$-th column in the matrix $A$ are zero. Therefore, $x_i$ is equidistant to all other points $x_j$ and 
$$
r_{ij}=\frac{N}{N-1}, \quad j=1,...,i-1,i+1,\ldots,N. 
$$
Thus, if a configuration has no point that is equidistant to all others, then $a_i \not= 0$ for all $i=1,\dots, N$. 

The following theorem is the main in this section. 

\begin{theorem} \label{thmA} Let $a_1,\ldots,a_N$ be real numbers that satisfy the following assumptions 
$$
a_1\ge\ldots\ge a_k>0>a_{k+1}\ge\ldots\ge a_N, \quad a_1+\ldots,+a_N=0, 
$$ 
$$
 \sum\limits_{j\ne i} {\frac{1}{c-a_ia_j}}=N, \; i=1,\ldots,N, \quad c-a_ia_j>0, \; \mbox{ for all } \; i\ne j, \; \mbox{ where } \; c:=\frac{N-1}{N}.  
$$
Then 
$$
a_1=...=a_k, \quad a_{k+1}=...=a_N. 
$$
\end{theorem}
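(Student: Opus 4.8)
The plan is to argue that the function $t\mapsto \frac{1}{c-a_i t}$ is strictly convex in $t$ (on the relevant range where $c-a_it>0$), and then exploit the system of equations $\sum_{j\ne i}\frac{1}{c-a_ia_j}=N$ together with the constraint $\sum_j a_j=0$ to force the positive $a_i$'s to coincide and likewise the negative ones. First I would set $S_i:=\sum_{j\ne i}\frac{1}{c-a_ia_j}$ and note that $S_i=N$ for every $i$. A natural device is to compare two indices $p<q$ among $1,\dots,k$ (so $a_p\ge a_q>0$) and to study the difference $S_p-S_q$. Writing this out, $S_p-S_q=\sum_{j\ne p,q}\left(\frac{1}{c-a_pa_j}-\frac{1}{c-a_qa_j}\right)+\left(\frac{1}{c-a_pa_q}-\frac{1}{c-a_qa_p}\right)$; the last bracket vanishes by symmetry, and each summand equals $\frac{(a_p-a_q)a_j}{(c-a_pa_j)(c-a_qa_j)}$. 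Hence $0=S_p-S_q=(a_p-a_q)\sum_{j\ne p,q}\frac{a_j}{(c-a_pa_j)(c-a_qa_j)}$. So either $a_p=a_q$, or the weighted sum $\sum_{j\ne p,q}\frac{a_j}{(c-a_pa_j)(c-a_qa_j)}$ is zero.

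The crux is therefore to rule out the second alternative, i.e. to show that $\sum_{j\ne p,q}\frac{a_j}{(c-a_pa_j)(c-a_qa_j)}\ne 0$ whenever $a_p\ne a_q$ are both among the positive entries (and symmetrically for the negative block). Here I would try to extract sign information. Note $\sum_j a_j=0$, so $\sum_{j\ne p,q}a_j=-(a_p+a_q)<0$ in the positive-block case. If the weights $w_j:=\frac{1}{(c-a_pa_j)(c-a_qa_j)}$ were all equal the sum would be negative, so the obstruction is that the weights vary. I expect the key monotonicity to be: since $a_p,a_q>0$, the map $a_j\mapsto (c-a_pa_j)(c-a_qa_j)$ is decreasing in $a_j$ on the range where both factors are positive, hence $w_j$ is increasing in $a_j$; then larger $a_j$ (the positive ones) get larger weight and smaller/negative $a_j$ get smaller weight, which makes $\sum_{j\ne p,q}w_ja_j$ even more negative than the unweighted sum — giving a strict inequality $\sum_{j\ne p,q}w_ja_j<0$, a contradiction. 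Making this ``rearrangement''-type comparison rigorous (e.g. via Chebyshev's sum inequality, applied to the comonotone sequences $(a_j)$ and $(w_j)$, after reducing to the sum over $j\ne p,q$ and handling the sign of $\sum_{j\ne p,q}a_j$) is the main obstacle, since one must be careful that Chebyshev gives $\frac{1}{M}\sum w_ja_j\ge\big(\frac{1}{M}\sum w_j\big)\big(\frac{1}{M}\sum a_j\big)$ for comonotone sequences, which has the wrong direction unless one first accounts for the sign of $\sum a_j<0$; a cleaner route may be to pair up terms $j,j'$ and use $(w_j-w_{j'})(a_j-a_{j'})\ge 0$ directly.

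Once the positive block is shown to be constant, $a_1=\dots=a_k=:\alpha>0$, and by the same argument applied to the negative indices, $a_{k+1}=\dots=a_N=:-\beta$ with $\beta>0$. (The negative-block version of the computation is identical, with the roles of the signs reversed: there $\sum_{j\ne p,q}a_j=-(a_p+a_q)>0$ and the weights are decreasing in $a_j$, again forcing a strict sign and a contradiction unless $a_p=a_q$.) I would then note that the remaining relations $k\alpha=(N-k)\beta$ (from $\sum a_j=0$) and the two surviving instances of \eqref{req} pin down $\alpha$ and $\beta$, but that is not needed for the statement of Theorem \ref{thmA} as phrased — the conclusion $a_1=\dots=a_k$, $a_{k+1}=\dots=a_N$ is exactly what the pairwise argument yields, so the proof ends there.
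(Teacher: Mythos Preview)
Your approach has a fatal gap: the quantity you hope to show is nonzero is in fact identically zero under the hypotheses. Concretely, from $\sum_{j\ne i}\frac{1}{c-a_ia_j}=N$ and $c=\tfrac{N-1}{N}$ one gets, writing $R_i:=\sum_{j\ne i}\frac{a_j}{c-a_ia_j}$,
\[
N-1=\sum_{j\ne i}\frac{c-a_ia_j}{c-a_ia_j}=cN-a_iR_i=(N-1)-a_iR_i,
\]
so $a_iR_i=0$ and hence $R_i=0$ for every $i$ (all $a_i\ne 0$). Now do a partial-fraction decomposition: for $a_p\ne a_q$,
\[
\frac{1}{(c-a_px)(c-a_qx)}=\frac{1}{c(a_p-a_q)}\Bigl(\frac{a_p}{c-a_px}-\frac{a_q}{c-a_qx}\Bigr).
\]
Plugging $x=a_j$ and summing over $j\ne p,q$ gives
\[
\sum_{j\ne p,q}\frac{a_j}{(c-a_pa_j)(c-a_qa_j)}
=\frac{1}{c(a_p-a_q)}\Bigl(a_p\bigl(R_p-\tfrac{a_q}{c-a_pa_q}\bigr)-a_q\bigl(R_q-\tfrac{a_p}{c-a_pa_q}\bigr)\Bigr)=0.
\]
So your factorization $S_p-S_q=(a_p-a_q)\sum_{j\ne p,q}w_ja_j$ reads $0=(a_p-a_q)\cdot 0$ and carries no information whatsoever about whether $a_p=a_q$. (Incidentally, your heuristic for the sign is also reversed: if $w_j$ is increasing in $a_j$, Chebyshev gives $\sum w_ja_j\ge \bar w\sum a_j$, i.e.\ the weighted sum is \emph{less} negative, not more.)

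The paper's argument avoids pairwise comparisons entirely. From $R_i=0$ one extracts the scalar identity $\sum_{j\ne i}\frac{c-a_j^2}{c-a_ia_j}=N-2$, and a separate estimate shows $|a_j|<\sqrt{c}$ for all $j$. One then considers the single function
\[
F(t)=\sum_{j=1}^{N}\frac{c-a_j^2}{c-t a_j},
\]
observes that $F(a_i)=N-1$ for every $i$, and checks $F''(t)>0$ on $(-\sqrt{c},\sqrt{c})$ because each numerator $c-a_j^2$ is positive. Strict convexity forces $F(t)=N-1$ to have at most two solutions, one positive and one negative, which gives $a_1=\cdots=a_k$ and $a_{k+1}=\cdots=a_N$. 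The key idea you are missing is to pass from the raw sums $\sum_{j\ne i}\frac{1}{c-a_ia_j}$ to a sum with the \emph{fixed positive} weights $c-a_j^2$, so that convexity in the free variable $t$ becomes usable.
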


First we prove  two technical Lemmas. 

\begin{lemma}\label{lm3} Suppose  $a_1,\ldots,a_N$  are as in Theorem \ref{thmA}. Then for all $i=1,\ldots,N$ we have
\begin{equation}\label{T_i}
T_i :=\sum\limits_{j\not= i}{\frac{c-a_j^2}{c-a_ia_j}}=N-2. 
\end{equation}
\end{lemma}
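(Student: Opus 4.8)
The plan is to collapse $T_i$ onto the given constraint $\sum_{j\neq i}(c-a_ia_j)^{-1}=N$ by rewriting every occurrence of $a_j$ through the single quantity $u_{ij}:=c-a_ia_j$. This is legitimate since all the $a_\ell$, and in particular $a_i$, are nonzero by hypothesis, so from $a_ia_j=c-u_{ij}$ we get $a_j=(c-u_{ij})/a_i$. First I would peel off the easy part: splitting the numerator $c-a_j^2$ into $c$ and $-a_j^2$ gives $T_i=c\sum_{j\neq i}u_{ij}^{-1}-\sum_{j\neq i}a_j^2\,u_{ij}^{-1}$, and the constraint immediately reduces this to $T_i=cN-S_i$ with $S_i:=\sum_{j\neq i}a_j^2\,u_{ij}^{-1}$. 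Recalling $cN=N-1$, it then remains only to show $S_i=1$.

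For the second step I would expand $a_j^2=(c-u_{ij})^2/a_i^2=(c^2-2c\,u_{ij}+u_{ij}^2)/a_i^2$, so that $a_j^2\,u_{ij}^{-1}=a_i^{-2}\big(c^2u_{ij}^{-1}-2c+u_{ij}\big)$. Summing over $j\neq i$ and using the two structural identities $\sum_{j\neq i}u_{ij}^{-1}=N$ and $\sum_{j\neq i}u_{ij}=c(N-1)-a_i\sum_{j\neq i}a_j=c(N-1)+a_i^2$ (the last because $\sum_\ell a_\ell=0$ forces $\sum_{j\neq i}a_j=-a_i$) yields
$$
S_i=\frac{1}{a_i^{2}}\Big(c^{2}N-2c(N-1)+c(N-1)+a_i^{2}\Big)=\frac{1}{a_i^{2}}\Big(c^{2}N-c(N-1)+a_i^{2}\Big).
$$
The crucial cancellation is $c^{2}N-c(N-1)=c\big(cN-(N-1)\big)=0$, again from $cN=N-1$, so $S_i=1$ and therefore $T_i=cN-1=N-2$ for every $i=1,\dots,N$.

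There is no genuine obstacle here: the lemma is an algebraic identity that falls out as soon as one decides to express both $c-a_j^2$ and $a_j^2$ in terms of $c-a_ia_j$. The only points requiring attention are the bookkeeping of the two inputs that drive the cancellations — the global relation $\sum_\ell a_\ell=0$ (used in the form $\sum_{j\neq i}a_j=-a_i$) and the normalization $cN=N-1$ — and the harmless division by $a_i$, which is valid precisely because the hypothesis $a_1\ge\dots\ge a_k>0>a_{k+1}\ge\dots\ge a_N$ rules out $a_i=0$.
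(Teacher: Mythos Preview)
Your proof is correct. The paper reaches the same conclusion through a slightly different chain of identities: it first introduces $R_i:=\sum_{j\ne i}a_j/(c-a_ia_j)$ and observes from $N-1=\sum_{j\ne i}1=cQ_i-a_iR_i$ that $R_i=0$; combining this with $\sum_{j\ne i}a_j=-a_i$ it obtains $\sum_{j\ne i}(a_j^2-a_ia_j)/(c-a_ia_j)=1$, and subtracting this from $N-1=\sum_{j\ne i}1$ gives $T_i=N-2$. Your route---substituting $a_j=(c-u_{ij})/a_i$ and reducing everything to the two known sums $\sum_{j\ne i}u_{ij}^{-1}=N$ and $\sum_{j\ne i}u_{ij}=c(N-1)+a_i^2$---is a bit more direct and avoids the auxiliary quantity $R_i$ altogether. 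The trade-off is that the paper's intermediate identity $\sum_{j\ne i}(a_j^2-a_ia_j)/(c-a_ia_j)=1$ (its equation~\eqref{S_i}) is reused verbatim in the proof of the next lemma, so their packaging pays a small dividend downstream that your version would have to reprove.
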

\begin{proof}

Let 
$$ 
Q_i:=\sum\limits_{j\ne i} {\frac{1}{c-a_ia_j}}.
$$
Then by the assumption $Q_i=N$ for all $i$. 

Let 
$$
R_i:=\sum\limits_{j\ne i} {\frac{a_j}{c-a_ia_j}}. 
$$
Since $a_i\ne0$, we obtain from
$$
N-1=\sum\limits_{j\ne i} {\frac{c-a_ia_j}{c-a_ia_j}}=c\,Q_i-a_iR_i=N-1-a_iR_i,
$$
that $R_i =0$. Along with $a_i=-(a_1+...+a_{i-1}+a_{i+1}+...a_N)$ we derive the following equality 
$$
 a_i=
 (c-a_i^2)
 \sum\limits_{j\ne i} {\frac{a_j}{c-a_ia_j}}-\sum\limits_{j\ne i} {a_j}= a_i \sum\limits_{j\not= i} {\frac{a_j^2-a_ja_i}{c-a_ia_j}}.
$$
As $a_i \not= 0$ this yields
\begin{equation}\label{S_i}
S_i:=\sum\limits_{j\not= i} {\frac{a_j^2-a_ja_i}{c-a_ia_j}}=1,
\end{equation}
and subsequently
$$
N-2=\sum\limits_{j\not= i} {\frac{c-a_ia_j}{c-a_ia_j}}-S_i=\sum\limits_{j\not= i}{\frac{c-a_j^2}{c-a_ia_j}}=T_i
$$
\end{proof}

\begin{lemma} \label{lm4} Suppose  $a_1,\ldots,a_N$  are as in Theorem \ref{thmA}. Then 
$$
|a_i| < \sqrt{c}, \quad i=1,...,N. 
$$

\end{lemma}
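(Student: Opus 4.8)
The plan is to bound $|a_i|$ using the summation identity $T_i = N-2$ from Lemma~\ref{lm3}, together with the constraint $c - a_i a_j > 0$ for all $i \neq j$. The key structural observation is that the terms $c - a_j^2$ appearing in $T_i$ are \emph{not} all of the same sign in general — but we can control the situation by focusing on the extreme indices. Suppose, for contradiction, that $|a_i| \geq \sqrt{c}$ for some $i$; by symmetry (replacing all $a_j$ by $-a_j$ swaps the two groups), we may assume $a_1 \geq \sqrt{c}$, so $a_1$ is the largest in absolute value among all the $a_j$.

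First I would record the immediate consequence: since $a_1 \geq \sqrt{c}$ and $c - a_1 a_j > 0$ for $j \neq 1$, we get $a_j < c/a_1 \leq a_1$ for all $j \neq 1$, and moreover for $j \geq k+1$ (the negative ones) the product $a_1 a_j < 0$ so $c - a_1 a_j > c$. Now examine $T_1 = \sum_{j \neq 1} \frac{c - a_j^2}{c - a_1 a_j} = N-2$. For $j$ in the positive group $2 \leq j \leq k$ we have $0 < a_j \le a_1$, hence $a_j^2 \le a_1 a_j < c$, so $c - a_j^2 > c - a_1 a_j > 0$ and each such term \emph{exceeds} $1$. For $j$ in the negative group, $c - a_1 a_j > c$ while $c - a_j^2$ could be negative if $|a_j|$ is large, but $|a_j| \le$ something controlled; the point is to show the positive-group terms already force $T_1$ to be too large unless there are very few of them, and then handle the small cases separately. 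Actually the cleaner route: use $S_i = 1$ from \eqref{S_i} as well. Write $S_1 = \sum_{j\neq 1} \frac{a_j^2 - a_1 a_j}{c - a_1 a_j} = 1$. Each term is $\frac{a_j(a_j - a_1)}{c - a_1 a_j}$; for $2\le j\le k$ this is $\le 0$ since $0 < a_j \le a_1$ and the denominator is positive; for $j \ge k+1$ it is $\frac{(\text{positive})(\text{negative})}{(\text{larger positive})}$... wait, $a_j < 0$ and $a_j - a_1 < 0$ so the numerator is positive, denominator positive, term positive. So $S_1 = 1$ means the positive contributions from the negative group must total at least $1$.

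The main obstacle I anticipate is packaging these sign analyses into a clean contradiction without case-splitting on $k$. My intended resolution is to combine the two identities: from $T_1 = N-2$ and $Q_1 = N$ we have $\sum_{j\neq 1}\frac{a_j^2}{c - a_1 a_j} = cQ_1 - T_1 = cN - N + 2 = 2$ (using $cN = N-1$, so this equals $N-1-(N-2) = 1$; let me recompute: $\sum_{j\ne1}\frac{c-a_j^2}{c-a_1a_j} = c\sum\frac{1}{c-a_1a_j} - \sum\frac{a_j^2}{c-a_1a_j} = cQ_1 - \Sigma = \frac{N-1}{N}\cdot N - \Sigma = N-1-\Sigma$, and this equals $N-2$, so $\Sigma := \sum_{j\neq 1}\frac{a_j^2}{c-a_1a_j} = 1$). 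Now among the terms of $\Sigma$, the one with $j=2$ (if $k\ge 2$, i.e. $a_2>0$) contributes $\frac{a_2^2}{c - a_1 a_2} \ge \frac{a_2^2}{c}$; but one must be careful when $k=1$. If $k=1$, then $a_1 \ge \sqrt c$ is the only positive number and $\Sigma = \sum_{j\ge 2} \frac{a_j^2}{c - a_1 a_j}$ with all $a_j<0$, so each denominator exceeds $c$, giving $\Sigma < \frac{1}{c}\sum_{j\ge2} a_j^2 = \frac{1}{c}\sum_{j\ge 2} a_j^2$. Meanwhile $\sum_{j\ge 2} a_j = -a_1 \le -\sqrt c$, and by convexity $\sum_{j\ge2} a_j^2 \ge a_1^2/(N-1) \ge c/(N-1)$ — this lower bound is too weak. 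Instead use: since each $|a_j| \le |a_1|$'s role is now reversed — actually $a_1$ large forces, via $S_1=1$ and the negative group, a matching constraint; and symmetrically apply the same reasoning to $a_N$ (the most negative). Comparing the bound obtained at index $1$ with the one at index $N$, and using $a_1 + \dots + a_N = 0$, should yield $|a_1| < \sqrt c$. I would write out the $k=1$ (equivalently $k=N-1$) boundary case explicitly since that is where the estimates are tightest, and treat $2 \le k \le N-2$ by noting the single term $\frac{a_2^2}{c-a_1a_2} \le \Sigma = 1$ together with $\frac{a_{N}^2}{c - a_k a_N}\le 1$ pins down the magnitudes, after which the sum-zero condition closes the argument.
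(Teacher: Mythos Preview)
Your proposal has a genuine gap: you never actually close the contradiction. After deriving the identity $\Sigma := \sum_{j\neq 1}\frac{a_j^2}{c-a_1 a_j}=1$, you attempt a sign/case analysis at the single index $i=1$, but you yourself note that in the case $k=1$ the resulting bound ``is too weak,'' and the final paragraph (``comparing the bound obtained at index $1$ with the one at index $N$ \dots\ closes the argument'') is only a hope, not an argument. The difficulty is real: information at a single index $i$ does not control $a_1^2$ well enough, because the terms in $\Sigma$ with $a_j<0$ have denominators $c-a_1a_j>c$ and can be made small, so $\Sigma=1$ is compatible with $a_1$ large.

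The paper's proof avoids contradiction and case-splitting entirely by exploiting the full system of identities $S_i=1$ simultaneously. For each $i\ge 2$, isolate the $j=1$ term in $S_i=1$ to get
\[
\sum_{2\le j\le N,\; j\neq i}\frac{a_j^2-a_ia_j}{c-a_ia_j}=\frac{c-a_1^2}{c-a_ia_1}.
\]
Summing this over $i=2,\dots,N$ and symmetrizing the left-hand side (the $(i,j)$ and $(j,i)$ contributions combine) gives
\[
\sum_{2\le j<i\le N}\frac{(a_i-a_j)^2}{c-a_ia_j}=(c-a_1^2)\sum_{i=2}^{N}\frac{1}{c-a_1a_i}=(c-a_1^2)\,Q_1=N(c-a_1^2).
\]
The left-hand side is strictly positive, so $c-a_1^2>0$; taking $a_1$ to be of maximal absolute value finishes the lemma. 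This summation identity is the missing idea in your approach.
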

\begin{proof}  
Let $i>1$. By \eqref{S_i} we have
$$
1=\sum\limits_{j\not= i} {\frac{a_j^2-a_ia_j}{c-a_ia_j}}= \frac{a_1^2-a_ia_1}{c-a_ia_1}+\sum\limits_{2\leq j\not= i} {\frac{a_j^2-a_ia_j}{c-a_ia_j}}.
$$
Then 
$$
\sum\limits_{2\leq j\not= i} {\frac{a_j^2-a_ia_j}{c-a_ia_j}}= {\frac{c-a_1^2}{c-a_ia_1}},\quad i=2,\ldots,N. 
$$
Therefore, 
$$
\sum\limits_{i=2}^N\sum\limits_{2\leq j\not= i} {\frac{a_j^2-a_ia_j}{c-a_ia_j}}= \sum\limits_{i>j=2}^N {\frac{(a_i-a_j)^2}{c-a_ia_j}}=(c-a_1^2)\sum\limits_{i=2}^N {\frac{1}{c-a_ia_1}}=(c-a_1^2)Q_1.
$$
Since $Q_1=N$ and by the assumption $c-a_ia_j>0$,  we have
\begin{equation}\label{Positivity}
c-a_1^2=\frac{1}{N}\sum\limits_{i>j=2}^N {\frac{(a_i-a_j)^2}{c-a_ia_j}}>0. 
\end{equation}
We may assume that   $|a_1|\ge |a_i|$ for all $i$.  Thus, \eqref{Positivity} implies that $c-a_i^2>0$. \end{proof} 

\medskip

\noindent{\it Proof of Theorem \ref{thmA}}: 
Let 
$$
F(t):=\sum\limits_{j=1}^N{\frac{c-a_j^2}{c-ta_j}}. 
$$
Then Lemma \ref{lm3} implies that for all $i=1,\ldots,N$ 
\begin{equation}\label{ConvexValues}
F(a_i)=N-1. 
\end{equation}
Since 
$$
F''(t)=2\sum\limits_{j}{\frac{\left(c-a_j^2\right)a_j^2}{(c-ta_j)^3}},
$$
by Lemma \ref{lm4} we have $F''(t)>0$ for $t\in (-\sqrt{c},\sqrt{c})$.
Hence $F(t)$ is a convex function in this interval. Therefore, the equation $F(t)=N-1$ has at most two solutions. By assumptions we have $a_i>0$ for $i=1,\dots , k$ and $a_i<0$, for $i=k+1,\dots , N$. Thus, \eqref{ConvexValues} yields that  all positive $a_i$ are equal and all negative $a_i$ are equal too.

\section{Stationary Configurations - Proofs}

We are now in a position to prove the classification result Theorem \ref{thmMain}.

\medskip

\noindent{\it Proof of Theorem \ref{thmMain}}:
As there is no point that is equidistant from all others we have $a_i\not=0$ for all $i=1,\dots , N$ Theorem \ref{thmA} yields
$$
a:=a_1=\ldots=a_k> 0>  a_{k+1}=\ldots=a_N=:b, 
$$
where $ka+(N-k)b=0$.
As 
$$
a(x_1+\cdots + x_k)+b(x_{k+1}+\cdots +x_N)=0\quad \mbox{and}\quad  x_1+\cdots+x_N=0, 
$$
we obtain that $x_1+\cdots + x_k=0=x_{k+1}+\cdots +x_N$. Moreover, using \eqref{S_i} we easily obtain that $a^2=(N-k)/(kN)$, $b^2=k/((N-k)N)$, and $ab=-1/N$. This yields that $x_i\cdot x_j=-1/(k-1)$ for $1\leq i<j\leq k$, $x_i\cdot x_j=-1/(N-k-1)$ for $k+1\leq i<j\leq N$, and $x_i \cdot x_j =0$ for $1\leq i\leq k<j\leq N$. This proves the theorem.
\hfill $\Box$

\medskip

We next derive that degenerate stationary configurations may not be local minima of the $h$-energy for convex potential interaction $h$.

\medskip

We shall first introduce the following lemma.
 \begin{lemma}\label{Decr_Incr_Lem}
 Let $h:[-1,1]\to \mathbb{R}$ be a strictly convex function and let $a,b\in \mathbb{R}$ be such that $|a|+|b|\leq 1$, $b\not=0$. Then the function 
 $$
 F(t):=h(a+bt)+h(a-bt)
$$
is strictly decreasing for $t\in [-1,0]$ and strictly increasing for $t\in [0,1]$.
 \end{lemma}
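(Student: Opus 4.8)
The plan is to differentiate $F$ and exploit the monotonicity of $h'$, which is guaranteed by strict convexity. Write $F'(t) = b\,h'(a+bt) - b\,h'(a-bt) = b\bigl(h'(a+bt) - h'(a-bt)\bigr)$. The arguments $a+bt$ and $a-bt$ both lie in $[-1,1]$ for $t\in[-1,1]$ since $|a|+|b|\le 1$, so $h'$ is defined there (a convex function on $[-1,1]$ has one-sided derivatives everywhere in the interior and, after fixing, is differentiable except possibly on a countable set; I would first reduce to the case where $h$ is differentiable, or simply work with one-sided derivatives throughout — see below). The key observation is that $h'$ is strictly increasing because $h$ is strictly convex. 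Hence the sign of $h'(a+bt) - h'(a-bt)$ is the sign of $(a+bt) - (a-bt) = 2bt$, so $F'(t) = b\bigl(h'(a+bt)-h'(a-bt)\bigr)$ has the same sign as $b\cdot(2bt) = 2b^2 t$, i.e.\ the sign of $t$ (using $b\neq 0$). Therefore $F'(t) < 0$ for $t\in(-1,0)$ and $F'(t) > 0$ for $t\in(0,1)$, which gives strict monotonicity on $[-1,0]$ and on $[0,1]$ respectively.

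To make this rigorous without assuming $h\in C^1$, I would avoid differentiation and argue directly from the definition of strict convexity. Fix $-1\le t_1 < t_2 \le 0$; I want $F(t_1) > F(t_2)$, i.e.\ $h(a+bt_1) + h(a-bt_1) > h(a+bt_2) + h(a-bt_2)$. The four points $a+bt_1,\ a+bt_2,\ a-bt_2,\ a-bt_1$ are symmetric about $a$ and, because $t_1 < t_2 \le 0$, one checks that the outer pair $\{a+bt_1,\ a-bt_1\}$ and the inner pair $\{a+bt_2,\ a-bt_2\}$ have the same midpoint $a$ and the inner pair is "nested" strictly inside the outer pair (this holds regardless of the sign of $b$, by symmetry). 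A standard fact about strictly convex functions is that if $u_1 < v_1 \le v_2 < u_2$ with $u_1 + u_2 = v_1 + v_2$, then $h(u_1) + h(u_2) > h(v_1) + h(v_2)$ — the sum of values at a symmetric pair strictly increases as the pair spreads out. Applying this with the outer/inner pairs gives the desired strict inequality, and the strictly-increasing claim on $[0,1]$ follows by the same argument (or by noting $F(t) = F(-t)$).

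The main obstacle is purely a matter of careful bookkeeping: verifying that all arguments stay in $[-1,1]$ (immediate from $|a|+|b|\le 1$) and that the "nesting with common midpoint" claim holds for both signs of $b$ and handles the endpoint $t=0$ correctly (where the two pairs coincide and $F$ has a strict minimum). I do not anticipate any real difficulty; the only subtlety is deciding whether to present the clean one-line derivative computation (which requires either assuming differentiability or invoking one-sided derivatives of convex functions) or the slightly longer but fully elementary symmetric-pair argument. I would likely present the derivative version for readability, remarking that strict convexity makes $h'$ strictly increasing wherever it exists and that the one-sided derivatives suffice at the countably many non-differentiable points.
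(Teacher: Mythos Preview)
Your proposal is correct, and your second (elementary) argument is essentially the paper's proof: the paper notes that $F$ is even, takes $0\le t_1<t_2\le 1$, and writes out the explicit convex-combination coefficients $\alpha=(t_1+t_2)/(2t_2)$, $\beta=(t_2-t_1)/(2t_2)$ showing that $a\pm bt_1$ are convex combinations of $a+bt_2$ and $a-bt_2$, then adds the two strict Jensen inequalities to get $F(t_1)<F(t_2)$---exactly the ``symmetric pair spreads out'' fact you invoke as standard. Your derivative-based version is a slight variant; as you yourself note, it needs some care with one-sided derivatives and endpoints (e.g.\ when $|a|+|b|=1$ the arguments can reach $\pm 1$), so the convex-combination argument is the cleaner choice and is what the paper uses.
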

 \begin{proof}
 Since $F(t)$ is even, we consider only $t\in [0,1]$. Let $0\leq t_1<t_2\leq 1$. Define
 $$
 \alpha:=\frac{t_1+t_2}{2t_2},\quad \beta:=\frac{t_2-t_1}{2t_2}.
 $$
 Clearly, $\alpha, \beta >0$ and $\alpha+\beta=1$. Observe that 
 $$
 a+bt_1=\alpha(a+bt_2)+\beta(a-bt_2),\quad a-bt_1=\beta(a+bt_2)+\alpha(a-bt_2).
 $$
 Using the strict convexity of $h$ and that $a+bt_2\not= a-bt_2$ ($b\not=0$) we obtain
 \begin{equation}\label{Jenssen}
 h(a+bt_1)<\,\alpha h(a+bt_2)+\beta h(a-bt_2) ,\quad
 h(a-bt_1)<\,\beta h(a+bt_2)+\alpha h(a-bt_2)
 \end{equation}
 Adding the two inequalities in \eqref{Jenssen} we derive the lemma.
\end{proof}


\medskip

\noindent{\it Proof of Theorem \ref{thmDeg}}: 
As $X$ is degenerate, we may assume without loss of generality that the Equatorial hyperplane contains $X$, or $X\subset \{x_d=0\}$. Since $N\geq d+2$, $X$ is not a regular simplex and therefore there are at least two adjacent edges of distinct length, say $|x_3-x_1|\not=|x_3-x_2|$, or equivalently $x_1\cdot x_3\not= x_2\cdot x_3$. Without loss of generality assume 
$$x_1=(r,\sqrt{1-r^2},0,\dots,0), x_2=(r,-\sqrt{1-r^2},0,\dots,0), x_j=(c_{j1},c_{j2},c_{j3},\dots,0),\ j=3,\dots,N,$$ where at least $c_{32}\not=0$. Form the configuration $\widetilde{X}$ with the first two points perturbed 
$$\tilde{x}_1=(r,\sqrt{1-r^2}\cos \theta,0,\dots,\sqrt{1-r^2}\sin \theta), \tilde{x}_2=(r,-\sqrt{1-r^2}\cos \theta,0,\dots,-\sqrt{1-r^2}\sin \theta).$$
Observe that 
$$\tilde{x}_1\cdot x_j=c_{j1}r+c_{j2}\sqrt{1-r^2}\cos \theta, \quad \tilde{x}_2\cdot x_j=c_{j1}r-c_{j2}\sqrt{1-r^2}\cos \theta.$$

We now apply Lemma \ref{Decr_Incr_Lem} with $a=c_{j,1}r$, $b=c_{j2}\sqrt{1-r^2}$, and $t=\cos{\theta}$ to conclude that for all $j$ such that $c_{j2}\not=0$ (this is not empty as $c_{32}\not=0$)
$$
h(\tilde{x}_1\cdot x_j)+h(\tilde{x}_2\cdot x_j)<h({x}_1\cdot x_j)+h({x}_2\cdot x_j).
$$
Obviously if $c_{j2}=0$ we have equality in the above inequality. This implies that $E_h (\widetilde{X}) < E_h (X)$ for all $0<\theta < \pi$. 
\hfill $\Box$

\medskip
%
%
%

\medskip

 \noindent{\it Proof of Theorem  \ref{thmPyramid} }: Theorem \ref{thmMain} shows that non-degenerate stationary configuration $X$ must either split into two orthogonal regular simplexes $X=X_m \cup X_n$ with $m+n=d+2$, or have a vertex that is equidistant to all other vertices. The first case will be dealt with in Section 5.

 Suppose that the second case holds. As in the discussion before the formulation of the theorem, suppose $x_N \cdot x_i=-1/(N-1)$ for all $i=1,\dots,N-1$. For all $i=1,\dots,N-1$ denote $x_i=(y_i,-1/(N-1))$ and let $z_i:=(N-1)y_i/\sqrt{N(N-2)}$. Then $\{z_i \}_{i=1}^{N-1} \subset \mathbb{S}^{d-2}$ satisfy similar force equations as \eqref{VecEqs}. 
 
 As $\{x_i\}$ is non-degenerate, so is $\{z_i\}$. Thus, we have reduced the problem's  dimension. The process will stop and at the last step we shall obtain two orthogonal simplexes. 
 
 So, without loss of generality we may assume the process has stopped after one step, namely we have a configuration of the type $\{ 1,k,m\}$, where one of the points $p:=(0_{k-1},0_{m-1},1)$ is equidistant to all others, and these other points form two regular orthogonal simplexes 
 \[ Y:=\{ (\sqrt{1-1/(k+m)^2}\, y_i,0_{m-1},-1/(k+m))\},\ Z:=\{ (0_{k-1},\sqrt{1-1/(k+m)^2}\, z_j,-1/(k+m))\}\] 
 with $k$ and $m$ points respectively (here $1+k+m=d+2$). We perturb the configuration $X:=\{p,Y,Z\}$ to $\widetilde{X}_t:=\{p,\widetilde{Y}_t,\widetilde{Z}_t\}$, where
 $$
 \widetilde{Y}_t= \left\{ \left( \sqrt{1-(mt+1/(k+m))^2}\, y_i,0_{m-1},-1/(k+m)-mt \right) \right\}_{i=1}^k 
 $$
 and
 $$
  \widetilde{Z}_t= \left\{ \left(0_{k-1},\sqrt{1-(kt-1/(k+m))^2}\, z_j,-1/(k+m)+kt \right) \right\}_{j=1}^m.
 $$
The logarithmic energy of the perturbed configuration as a function of $t$ is given by 

\begin{align}
E_{\log}(X_t)=&\frac{k(k+1)}{2}\log \left( \frac{1}{1+\frac{1}{k+m}+mt}\right)+\frac{k(k-1)}{2}\log\left( \frac{1}{1-\frac{1}{k+m}-mt} \cdot \frac{k}{k-1} \right)\nonumber\\
& +\frac{m(m+1)}{2}\log \left(\frac{1}{1+\frac{1}{k+m}-kt}\right) +\frac{m(m-1)}{2}\log \left(\frac{1}{1-\frac{1}{k+m}+kt}\cdot \frac{m}{m-1}\right)\\
 &+km\log \left(\frac{1}{1-(\frac{1}{k+m}+mt)(\frac{1}{k+m}-kt)}\right) =: f(t) \nonumber
\end{align}

The derivative can be computed as
\begin{align}
f^\prime (t)=&\frac{km(m+k)t(mt+\frac{1}{k+m})(kt-\frac{1}{k+m})}{1-(\frac{1}{k+m}+mt)(\frac{1}{k+m}-kt)}
\left[ \frac{m}{1-\left( \frac{1}{k+m}+mt \right)^2} +\frac{k}{1-\left(\frac{1}{k+m}-kt\right)^2} \right] . 
\end{align}
Observe that the denominator of the first fraction and the expression in the brackets are positive as $\widetilde{X}_t \subset \mathbb{S}^{d-1}$. Therefore, 
\[  {\rm sign} \left( f^\prime (t) \right)=  {\rm sign} \left( t  \left( mt+\frac{1}{k+m} \right) \left( kt-\frac{1}{k+m} \right) \right).\] 

Thus, we observe that for $t\in [-1/m(k+m),0]$ the logarithmic energy is strictly increasing and for $t\in [0,1/k(k+m)]$ it is strictly decreasing, thus being maximal when $t=0$. This shows that $\{1,k,m\}$ is not a local minimum and we can make a continuous perturbation that decreases the energy from $t=0$ to $t=-1/m(k+m)$, which corresponds to a $\{k, m+1\}$ configuration of two orthogonal simplexes, or to $t=1/k(k+m)$, which corresponds to a $\{k+1, m\}$ configuration.

Of course, should we consider one of the simplexes, say $Y$, fixed and vary the other one within the hyperplne in which it is embedded (which is equivalent to let $z_j$ vary), then the maximum is attained when $Z$ is regular. Therefore, this is a case of a saddle point for the logarithmic energy.
\hfill $\Box$
\medskip

\section{Local Minima - Proof of the Main Result}

The proof of Theorem \ref{thmLogOpt} utilizes the following two lemmas.

\begin{lemma}\label{MatrixIneq1}
Let $A=(a_{ij} )$ be an $m\times m $ matrix, $m\geq 3$, such that {\rm (a)} $a_{ii}=0$, $i=1,\dots,m$; and {\rm (b)} $\sum_{j=1}^m a_{ij} =0$. Then the following inequality holds
\begin{equation}\label{D1'}
\sum_{1\leq i<j\leq m}
(a_{ij}+a_{ji})^2 \geq \frac{1}{m-2}\sum_{j=1}^m x_j^2,\quad {\rm where}\quad x_j:=\sum_{i=1}^m a_{ij} . 
\end{equation}
\end{lemma}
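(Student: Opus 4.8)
The plan is to symmetrize the matrix and then exhibit an explicit sum‑of‑squares certificate. Set $s_{ij}:=a_{ij}+a_{ji}$, so that $S=(s_{ij})$ is symmetric, has zero diagonal by hypothesis (a), and satisfies $\sum_{j=1}^m s_{ij}=\sum_j a_{ij}+\sum_j a_{ji}=x_i$ by hypothesis (b). Summing this last identity over $i$ gives $\sum_{j=1}^m x_j=\sum_{i,j}a_{ij}=0$, a fact I will use twice below. In this notation the left‑hand side of \eqref{D1'} is exactly $\sum_{i<j}s_{ij}^2$, so the task becomes to show $\sum_{i<j}s_{ij}^2\ge \frac{1}{m-2}\sum_j x_j^2$.

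The correct matrix to compare against is found by minimizing the strictly convex quadratic $\sum_{i<j}s_{ij}^2$ subject to the linear constraints $\sum_{j\ne i}s_{ij}=x_i$; a one‑line Lagrange‑multiplier computation, using $\sum_j x_j=0$ to pin down the multipliers, produces the extremal matrix $s_{ij}=(x_i+x_j)/(m-2)$. Rather than appeal to optimization, I will simply expand the manifestly nonnegative quantity
\[
0\ \le\ \sum_{i<j}\Bigl(s_{ij}-\tfrac{x_i+x_j}{m-2}\Bigr)^2
=\sum_{i<j}s_{ij}^2-\frac{2}{m-2}\sum_{i<j}s_{ij}(x_i+x_j)+\frac{1}{(m-2)^2}\sum_{i<j}(x_i+x_j)^2 .
\]

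It then remains only to evaluate the last two sums, which is routine. For the cross term, the symmetry of $S$ gives $\sum_{i<j}s_{ij}(x_i+x_j)=\sum_{i\ne j}s_{ij}x_i=\sum_i x_i\sum_{j\ne i}s_{ij}=\sum_i x_i^2$. For the pure‑$x$ term, expanding $(x_i+x_j)^2$ and using $\sum_j x_j=0$ yields $\sum_{i<j}(x_i+x_j)^2=(m-1)\sum_j x_j^2-\sum_j x_j^2=(m-2)\sum_j x_j^2$. Substituting both into the displayed inequality collapses it to $\sum_{i<j}s_{ij}^2-\frac{2}{m-2}\sum_j x_j^2+\frac{1}{m-2}\sum_j x_j^2\ge 0$, which is precisely \eqref{D1'}. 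Thus the argument is essentially a chain of elementary summation identities; the only genuine choice is which square to complete, and the hypothesis $m\ge 3$ enters exactly where $\tfrac1{m-2}$ must be finite. Equality holds if and only if $a_{ij}+a_{ji}=(x_i+x_j)/(m-2)$ for all $i\ne j$, which is the degree of freedom that will be exploited when this lemma is applied in the proof of Theorem \ref{thmLogOpt}.
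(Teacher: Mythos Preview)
Your proof is correct and rests on the same idea as the paper's: decompose the symmetrized entries as $s_{ij}=(x_i+x_j)/(m-2)+w_{ij}$, where $w_{ij}$ has zero row (and column) sums so that the cross term vanishes, and then read off the inequality from $\sum_{i<j}w_{ij}^2\ge 0$. Your presentation is in fact a bit more streamlined---you symmetrize first and complete the square directly, whereas the paper introduces an auxiliary asymmetric correction $\beta_{ij}=\frac{1}{m(m-2)}x_i+\frac{m-1}{m(m-2)}x_j$ at the level of $a_{ij}$ and only afterwards symmetrizes---but the two arguments are essentially the same decomposition carried out in different orders.
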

\begin{proof} For all $i,j=1,\dots,m$ define 
\[\beta_{ij}:=\frac{1}{m^2-2m} x_i+\frac{m-1}{m^2-2m} x_j, \quad i\not= j, \quad {\rm and}\ \  \beta_{ii}=0.\] 
Since $\sum_{j=1}^m x_j =0$, we have $\sum_{j=1}^m \beta_{ij}=0$ and $\sum_{i=1}^m \beta_{ij}=x_j$, i.e. 
\[ \sum_{j=1}^m \beta_{ij}=\sum_{j=1}^m a_{ij}\quad {\rm and} \quad \sum_{i=1}^m \beta_{ij}=\sum_{i=1}^m a_{ij}.\] 
Let $\widetilde{a}_{ij}:=a_{ij}-\beta_{ij}$. Then \[\sum_i \widetilde{a}_{ij}=\sum_j \widetilde{a}_{ij}=0.\]
Consider $t_{ij}:=a_{ij}+a_{ji}=w_{ij}+\beta_{ij}+\beta_{ji}$, where $w_{ij}=\widetilde{a}_{ij}+\widetilde{a}_{ji}$. Then $t_{ij}=w_{ij}+\frac{x_i}{m-2}+\frac{x_j}{m-2}$, $i\not= j$, where $\sum_i w_{ij}=\sum_j w_{ij}=0$ (observe that $t_{ii}=0$).
Then 
\[ \sum_{i<j} t_{ij}^2=\sum_{i<j} \left( w_{ij}+\frac{x_i}{m-2}+\frac{x_j}{m-2}\right)^2 =\sum_{i<j}w_{ij}^2+ \frac{1}{m-2}\sum_{i=1}^m x_i^2,\]
which implies \eqref{D1'}. 
\end{proof}

\medskip

\begin{lemma} \label{MatrixIneq2} 
Given an $m\times n$ matrix $F=(f_{ij})$ and an $n\times m$ matrix $G=(g_{ij})$ such that ${\sum_{j=1}^n f_{ij}=0}$ for all $i=1,\dots,m$ and  $\sum_{j=1}^m g_{ij}=0$ for all $i=1,\dots,n$. Then we have
\begin{equation} \label{D3'} \sum_{i=1}^n \sum_{j=1}^m (f_{ij}+g_{ji})^2\geq \frac{1}{m}\sum_{j=1}^n y_j^2+\frac{1}{n} \sum_{i=1}^m z_i^2, \quad {\rm where}\quad  y_j:=\sum_{i=1}^m f_{ij}, z_i:=\sum_{j=1}^n g_{ji}.
\end{equation}
\end{lemma}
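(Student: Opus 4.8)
The plan is to mimic the strategy of Lemma \ref{MatrixIneq1}: replace the matrices $F$ and $G$ by their "rank-one parts" built from the row/column sums, peel these off, and show the orthogonal remainder can only increase the left-hand side. Concretely, I would introduce $\varphi_{ij}:=\frac{1}{m}\,y_j$ for all $i=1,\dots,m$, $j=1,\dots,n$, and $\gamma_{ji}:=\frac{1}{n}\,z_i$ for all $j=1,\dots,n$, $i=1,\dots,m$. Since $\sum_j y_j = \sum_{i,j} f_{ij}=0$ and $\sum_i z_i=\sum_{i,j} g_{ji}=0$, one checks immediately that $\varphi$ has the same row sums as $F$ ($\sum_j \varphi_{ij}=\frac1m\sum_j y_j = 0 = \sum_j f_{ij}$) and the same column sums ($\sum_i \varphi_{ij} = y_j = \sum_i f_{ij}$), and likewise $\gamma$ matches the row and column sums of $G$.

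Next I would set $\widetilde f_{ij}:=f_{ij}-\varphi_{ij}$ and $\widetilde g_{ji}:=g_{ji}-\gamma_{ji}$, so that all row and column sums of $\widetilde f$ and $\widetilde g$ vanish. Writing $s_{ij}:=f_{ij}+g_{ji}=w_{ij}+\varphi_{ij}+\gamma_{ji}$ with $w_{ij}:=\widetilde f_{ij}+\widetilde g_{ji}$, I would expand
$$
\sum_{i=1}^m\sum_{j=1}^n s_{ij}^2 = \sum_{i,j} w_{ij}^2 + 2\sum_{i,j} w_{ij}\bigl(\varphi_{ij}+\gamma_{ji}\bigr) + \sum_{i,j}\bigl(\varphi_{ij}+\gamma_{ji}\bigr)^2 .
$$
The cross term vanishes: $\sum_{i,j} w_{ij}\varphi_{ij}=\frac1m\sum_j y_j\bigl(\sum_i w_{ij}\bigr)=0$ because $w$ has zero column sums, and symmetrically $\sum_{i,j} w_{ij}\gamma_{ji}=\frac1n\sum_i z_i\bigl(\sum_j w_{ij}\bigr)=0$ because $w$ has zero row sums. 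For the last term, $\sum_{i,j}\varphi_{ij}\gamma_{ji}=\frac{1}{mn}\bigl(\sum_j y_j\bigr)\bigl(\sum_i z_i\bigr)=0$, while $\sum_{i,j}\varphi_{ij}^2 = \frac1m\sum_j y_j^2$ and $\sum_{i,j}\gamma_{ji}^2=\frac1n\sum_i z_i^2$. Hence $\sum_{i,j} s_{ij}^2 = \sum_{i,j} w_{ij}^2 + \frac1m\sum_j y_j^2 + \frac1n\sum_i z_i^2 \geq \frac1m\sum_j y_j^2+\frac1n\sum_i z_i^2$, which is \eqref{D3'}.

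I do not anticipate a serious obstacle here; the only thing to be careful about is the bookkeeping of which index ranges over $\{1,\dots,m\}$ versus $\{1,\dots,n\}$ in the transposed matrix $G$, and verifying that both of the orthogonality relations (zero row sums and zero column sums of $w$) are genuinely available — they are, precisely because $F$ contributes the vanishing row sums and $G$ the vanishing column sums. Unlike Lemma \ref{MatrixIneq1}, there is no "diagonal" constraint to juggle, so the choice of the correction matrices $\varphi,\gamma$ is forced and the computation is cleaner; the factor $\frac{1}{m}$ (resp. $\frac1n$) arises automatically as $\bigl(\tfrac1m\bigr)^2\cdot m$ (resp. $\bigl(\tfrac1n\bigr)^2\cdot n$) from squaring the constant correction and summing over the free index.
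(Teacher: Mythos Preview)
Your proposal is correct and follows essentially the same approach as the paper's own proof: subtract off the rank-one corrections $y_j/m$ and $z_i/n$ from $f_{ij}$ and $g_{ji}$, observe that the remainder $w_{ij}$ (the paper calls it $t_{ij}$) has vanishing row and column sums, and expand the square to see the cross terms drop out, leaving exactly $\sum w_{ij}^2+\frac{1}{m}\sum y_j^2+\frac{1}{n}\sum z_i^2$. Your bookkeeping of the index ranges and the verification that $w$ has both zero row and zero column sums is accurate.
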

\begin{proof} Let 
\[ \widetilde{f}_{ij}:=f_{ij}-\frac{y_j}{m} \quad {\rm and}\quad \widetilde{g}_{ij}:=g_{ij}-\frac{z_i}{n}.\] Since $\sum_j y_j =\sum_i z_i=0$, we have $\sum_{i,j} (\widetilde{f}_{ij}+\widetilde{g}_{ji})=0$.
Let $t_{ij}:=\widetilde{f}_{ij}+\widetilde{g}_{ji}$. Observe that 
\[\sum_{i=1}^m t_{ij}=\sum_{j=1}^n t_{ij}=0.\] 
From
\[ f_{ij}+g_{ji}=\frac{y_j}{m}+\frac{z_i}{n}+t_{ij}.\]
one derives that
\[\sum_{i=1}^m \sum_{j=1}^n (f_{ij}+g_{ji})^2=\sum_{i=1}^m \sum_{j=1}^n \left( \frac{y_j}{m}+\frac{z_i}{n}+t_{ij}\right)^2 =\sum_{i=1}^m \sum_{j=1}^n t_{ij}^2+\frac{1}{m}\sum_{j=1}^n y_j^2+\frac{1}{n}\sum_{i=1}^m z_i^2,\]
which completes the proof. \end{proof}

 \noindent{\it Proof of Theorem  \ref{thmLogOpt} }: Denote the two regular orthogonal simplexes, whose centers of mass are both in the origin with
 \[X_m=\{ {x}_1,  {x}_2, \dots,  {x}_m\},\quad X_n=\{ {x}_{m+1}, {x}_{m+2}, \dots,  {x}_{m+n}\}. \]
 Let $\epsilon>0$ be a positive number and let us perturb the points of the simplexes to $ {y}_i \in \mathbb{S}^{d-1}$, $ {y}_i:= {x}_i+ {h}_i$, where $\|  {h}_i\|<\epsilon$, $i=1,\dots d+2$. Denote the new configuration $Y=Y_m\cup Y_n$. Since $\| {x}_i \|=\| {y}_i \|=1$, we have $2 {x}_i\cdot  {h}_i =-\| {h}_i \|^2$. We also have $1-y_i\cdot y_j=(1-x_i \cdot x_j)(1-z_{i,j})$, where
 \begin{equation} \label{zij}
 z_{i,j}:=
 \begin{cases} 
\displaystyle{ \frac{m-1}{m}(x_i\cdot h_j+x_j\cdot h_i+h_i\cdot h_j) }, & 1\leq i\not= j\leq m\\
 & \\
 x_i \cdot h_j + x_j \cdot h_i + h_i \cdot h_j , & i \leq m< j {\rm \ or\  } j \leq m< i \\
  & \\
\displaystyle{  \frac{n-1}{n}(x_i\cdot h_j+x_j\cdot h_i+h_i\cdot h_j) }, & m< i \not= j\leq m+n.
 \end{cases}
 \end{equation}
 Clearly $|z_{i,j}|<2\epsilon + O(\epsilon^2)$. The definition of the logarithmic energy \eqref{LogEn} implies that
 \begin{equation}\label{EnDiff}
 2\left[ E_{\rm log} (Y)-E_{\rm log}(X) \right] =-\sum_{1\leq i\not= j \leq m+n} \log (1-z_{i,j}) =\sum_{1\leq i\not= j \leq m+n} \left( z_{i,j}+\frac{z_{i,j}^2}{2}\right)+O (\epsilon^3).
 \end{equation}
 Excluding $O(\epsilon^3 )$ terms from \eqref{EnDiff} the remainder is 
 \[D:=\sum_{1\leq i\not= j \leq m+n} z_{i,j}+\frac{1}{2}\sum_{1\leq i\not= j\leq m+n} \left( \frac{x_i \cdot h_j+x_j \cdot h_i}{1-x_ \cdot x_j}\right)^2.\]
To compute $D$, without loss of generality we may assume that $x_i=(p_i,0)$, $h_i=(a_i,b_i)$, $i=1,\dots,m$ and $x_{m+j} = (0,q_j )$, $h_{m+j}=(c_j,d_j)$, $j=1,\dots,n$, where $p_i, a_i, c_j \in \mathbb{R}^{m-1}$ and $q_j, b_i, d_j \in \mathbb{R}^{n-1}$. 
Application of $2 {x}_i\cdot  {h}_i =-\| {h}_i \|^2$ yields
\[ \sum_{1\leq i\not= j \leq m}  z_{i,j} = \frac{2(m-1)}{m}\left( \sum_{i=1}^m x_i \right)\left( \sum_{i=1}^m h_i \right)+\Big\|\sum_{i=1}^m h_i\Big\|^2-\frac{1}{m}\Big\|\sum_{i=1}^m h_i\Big\|^2 .\]
As the origin is the center of mass of $X_m$ we have 
\[ \sum_{1\leq i\not= j \leq m}  z_{i,j} = \Big\|\sum_{i=1}^m h_i \Big\|^2 -\frac{1}{m}\left( \Big\|\sum_{i=1}^m a_i\Big\|^2+ \Big\|\sum_{i=1}^m b_i\Big\|^2\right) .\]
Similarly,
\[ \sum_{m+1\leq i\not= j \leq m+n}  z_{i,j} = \Big\|\sum_{j=m+1}^{m+n} h_j \Big\|^2 -\frac{1}{n}\left( \Big\|\sum_{j=1}^n c_j\Big\|^2+ \Big\|\sum_{j=1}^n d_j\Big\|^2\right) ,\]
and 
\[ \sum_{i=1}^m\sum_{j=m+1}^{m+n}  z_{i,j} = \left( \sum_{i=1}^m h_i \right)\cdot \left( \sum_{i=m+1}^{m+n} h_j \right),\]

This simplifies to
\begin{align}
D=& \Big\|\sum_{i=1}^{m+n} h_i \Big\|^2 - \frac{1}{m}\left( \Big\|\sum_{i=1}^m a_i\Big\|^2+\Big\|\sum_{i=1}^m b_i \Big\|^2\right)- \frac{1}{n}\left( \Big\|\sum_{j=1}^n c_j\Big\|^2+\Big\|\sum_{j=1}^{n} d_j \Big\|^2\right) \nonumber \\
&+\left(\frac{m-1}{m}\right)^2 \sum_{1\leq i<j\leq m} (p_i \cdot a_j +p_j \cdot a_i )^2+\left(\frac{n-1}{n}\right)^2 \sum_{1\leq i<j\leq n} (q_i \cdot d_j +q_j \cdot d_i )^2\\
&+ \sum_{i=1}^m \sum_{j=1}^n (p_i \cdot c_j +q_j \cdot b_i )^2.\nonumber
\end{align}

Thus, in this case we shall reduce the theorem to proving the inequalities
\begin{equation}\label{D1}
D_1 :=\left(\frac{m-1}{m}\right)^2 \sum_{1\leq i<j\leq m} (p_i \cdot a_j +p_j \cdot a_i )^2-\frac{1}{m}\Big\| \sum_{i=1}^m a_i \Big\|^2\geq 0
\end{equation}
\begin{equation}\label{D2}
D_2 :=\left(\frac{n-1}{n}\right)^2 \sum_{1\leq i<j\leq n} (q_i \cdot d_j +q_j \cdot d_i )^2-\frac{1}{n}\Big\| \sum_{j=1}^n d_j \Big\|^2\geq 0
\end{equation}
and 
\begin{equation}\label{D3}
D_3:=\sum_{i=1}^m \sum_{j=1}^n (p_i \cdot c_j +q_j \cdot b_i )^2-\frac{1}{m}\| \sum_{i=1}^m b_i \|^2-\frac{1}{n}\Big\| \sum_{j=1}^n c_j \Big\|^2 \geq 0. 
\end{equation}

{If we denote $\widetilde{h}_i:=h_i - \left( x_i\cdot h_i \right) x_i$, $i=1,\dots, m+n$, then $x_i \cdot \widetilde{h}_i =0$.
Since $2x_i \cdot h_i = -\|h_i\|^2$, we respectively have $\widetilde{a}_i = a_i+O(\epsilon^2)p_i$, $\widetilde{b}_i=b_i$, for $i=1,\dots, m$, and $\widetilde{c}_j=c_j$ and $\widetilde{d}_j = d_j+O(\epsilon^2)q_j$ for $j=1,\dots, n$. Therefore, by adding additional $O(\epsilon^3)$ terms to \eqref{EnDiff},  it suffices to prove (18) and (19) under the additional assumption that $p_i \cdot a_i =0$ and $q_j\cdot d_j =0$. }

To prove the inequalities we embed the first simplex $X_m=\{p_1,\dots, p_m \}$ in the hyperplane of $\mathbb{R}^m$ that is orthogonal to $(1,1,\dots,1)$. Similarly, we embed the second simplex $X_n=\{q_1,\dots, q_n \}$ in $\mathbb{R}^n$. Thus, we embed $X_m\cup X_n \subset \mathbb{R}^m \times \mathbb{R}^n$. Denote $w_m=(\frac{1}{m},\frac{1}{m},\dots, \frac{1}{m})  \in \mathbb{R}^m$ and let $\widetilde{p}_i:=e_i-w_m$, $i=1,\dots,m$. Then $p_i =\sqrt{\frac{m}{m-1}} \, \widetilde{p}_i$. Similarly, if $\widetilde{q}_j:=e_j-w_n$, then $q_j =\sqrt{\frac{n}{n-1}} \, \widetilde{q}_j$. For the perturbation vectors $a_i=(a_{i1},a_{i2},\dots,a_{im})$, $b_i=(b_{i1},b_{i2},\dots,b_{in})$, $c_j=(c_{j1},c_{j2},\dots,c_{jm})$, $d_j=(b_{j1},b_{j2},\dots,b_{jn})$, we will have that $\sum_{j=1}^m a_{ij} =0$, $\sum_{j=1}^n b_{ij} =0$, $i=1,\dots,m$, and $\sum_{j=1}^m c_{ij} =0$, $\sum_{j=1}^n d_{ij} =0$, $i=1,\dots,n$. The conditions $p_i \cdot a_i =0$ and $q_j \cdot d_j =0$ imply that $a_{ii}=0$ for all $i=1,\dots,m$ and $d_{jj}=0$ for all $j=1,\dots,n$.

Using that $\widetilde{p}_i\cdot a_j=a_{ji}$ we can re-write \eqref{D1} as
\[
\sum_{1\leq i<j\leq m}
(a_{ij}+a_{ji})^2 \geq \frac{1}{m-1}\sum_{j=1}^m \left( \sum_{i=1}^m a_{ij} \right)^2, 
\]
which follows from the stronger inequality \eqref{D1'} in Lemma \ref{MatrixIneq1}. Observe that equality holds in \eqref{D1} and \eqref{D2} if and only $a_{ij}+a_{ji}=0$ and $d_{ij}+d_{ji}=0$ respectively, which is equivalent to $p_i\cdot a_j + p_j \cdot a_i=0$, $q_j\cdot d_i+ q_i \cdot d_j=0$, $\sum a_i=0$, and $\sum d_j =0$.

In a similar manner we shall utilize Lemma \ref{MatrixIneq2} to derive the inequality \eqref{D3}. We have that 
\[p_i\cdot c_j +q_j \cdot b_i=\sqrt{\frac{m}{m-1}} c_{ji}+\sqrt{\frac{n}{n-1}} b_{ij} \]
with the substitution $f_{ij}=\sqrt{\frac{n}{n-1}} b_{ij} $ and $g_{ji}=\sqrt{\frac{m}{m-1}} c_{ji}$ we re-write \eqref{D3} as
\[  \sum_{i=1}^n \sum_{j=1}^m (f_{ij}+g_{ij})^2\geq \frac{1}{m}\frac{n-1}{n}\sum_{j=1}^n y_j^2+\frac{1}{n} \frac{m-1}{m}\sum_{i=1}^m z_i^2,\]
which clearly follows from \eqref{D3'}. Moreover, equality occurs if and only if $p_i\cdot c_j + q_j \cdot b_i=0$, $\sum c_i=0$, and $\sum b_j =0$.

\medskip 

To summarize, the quadratic term in $\epsilon$ will be strictly positive, and hence $E_{\rm log} (Y)-E_{\rm log}(X)>0$, for any perturbation vectors $\{a_i, b_i, c_i, d_i\}$ ($p_i\cdot a_i=0$, $q_j\cdot d_j=0$), except when 
\[ p_i\cdot a_j + p_j \cdot a_i=0, \quad q_j\cdot d_i+ q_i \cdot d_j=0, \quad p_i\cdot c_j + q_j \cdot b_i=0,\] 
and 
\[ \sum_{i=1}^m a_i=0, \quad \sum_{i=1}^m b_i =0, \sum_{j=1}^n c_i=0,\quad \sum_{j=1}^n d_j =0.\] 

Utilizing \eqref{zij} and these conditions, one simplifies \eqref{EnDiff} to

\begin{align*}\label{EnDiff4}
 2\left[ E_{\rm log} (Y)-E_{\rm log}(X) \right] = & \frac{(m-1)^2}{2m^2}\sum_{1\leq i\not= j \leq m}  (a_i\cdot a_j)^2+ \frac{(n-1)^2}{2n^2}\sum_{1\leq i\not= j \leq n} (d_i\cdot d_j)^2\\
 & +\sum_{i=1}^m\sum_{j=1}^n (b_i\cdot c_j )^2+O (\epsilon^5).
\end{align*}
Clearly, the quartic term will be positive, unless all inner products vanish, in which case we easily derive that $a_i=c_j=0$ and $b_i=d_j=0$ for all $i=1,\dots,m$ and $j=1,\dots,n$.
This completes the proof. \hfill $\Box$

\section{Concluding remarks and open problems}

\subsection{Morse theory of $(d+2)$-configurations in $d$--dimensions}

Let $\conf(d,N)$ denote  the configuration space of $N$--tuples of points in ${\mathbb S}^{d-1}$ up to isometry. Then 
$$
\conf(d,N)=({\mathbb S}^{d-1}\times...\times{\mathbb S}^{d-1})/SO(d) 
$$
and the dimension of this space is 
$$
\dim{\conf(d,N)}=(d-1)N-\frac{d(d-1)}{2}. 
$$
In particular, 
$$
\dim{\conf(d,d+2)}=\frac{(d+2)(d-1)}{2}, \quad \dim{\conf(3,5)}=7. 
$$

The {\em Morse index} of a critical point $x$ of a smooth function $f$ on a manifold $M$ is equal, by definition, to the negative index of inertia of the Hessian of $f$ at $x$.  

By the above results we have a classification of all critical (stationary) points of $E_{log}$ on $\conf(d,d+2)$. In particular, if $d=3$ then we have only three types of critical points: $C_0$ of type (0,5), $C_1$ of type (1,2,2) and $C_2$ of type (2,3). 

\begin{theorem} The Morse index of $E_{log}$ on $\conf(3,5)$ at $C_i$, $i=0,1,2$, is $2-i$. 
\end{theorem}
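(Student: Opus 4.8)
The plan is to compute the Morse index at each of the three critical types by producing, for each, an explicit orthogonal (or at least block-diagonalizing) decomposition of the tangent space $T_{C_i}\conf(3,5)$ into subspaces on which the Hessian of $E_{\log}$ is definite, and then counting the dimension of the maximal negative-definite subspace. Since $\dim\conf(3,5)=7$, each Hessian is a quadratic form on a $7$-dimensional space, and the claim is that the negative indices are $2,1,0$ for $C_0,C_1,C_2$ respectively. The case $C_0=(0,5)$, the two orthogonal simplexes with $m=n=\ldots$ wait, $5=3+2$ so $(m,n)$ with $m+n=5$; the minimum for $d=3$ odd is $m=n+1$, i.e. a $\{3,2\}$ split --- no: the type notation $(0,5)$ in the concluding section means the non-degenerate two-simplex configuration with no equidistant vertex, and the global minimum. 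For $C_0$ I would invoke the computation already carried out in the proof of Theorem \ref{thmLogOpt}: the second-order term $D$ in \eqref{EnDiff} is a sum of the manifestly nonnegative quantities $D_1,D_2,D_3$ together with $\|\sum h_i\|^2$ and negative corrections already absorbed, so the Hessian is positive semidefinite, with kernel exactly the tangent space to the $SO(3)$-orbit plus the directions along which the quartic term governs. After quotienting by the group action this kernel should vanish, giving Morse index $0$ --- but the theorem asserts index $2-0=2$ for $C_0$. So I must be careful: $C_0$ is in fact the configuration of type $(0,5)$ which, re-reading, corresponds to $k=0$, i.e. the \emph{regular simplex-like} degenerate-looking but actually here it is labelled so that $C_2$ of type $(2,3)$ is the global minimum (index $0$), $C_1$ of type $(1,2,2)$ the saddle (index $1$), and $C_0$ of type $(0,5)$ a configuration that is a local \emph{maximum} along two independent directions.

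Concretely, I would proceed as follows. First, set up coordinates: fix each critical configuration explicitly on $\mathbb{S}^2$, choose a slice transverse to the $SO(3)$-action, and write the $7$-dimensional tangent space with an orthonormal basis adapted to the symmetry group of the configuration (a product of symmetric groups, possibly with a sign involution). Second, decompose the Hessian quadratic form into isotypic components under this finite symmetry group; by Schur's lemma the Hessian is block-diagonal with respect to this decomposition, which reduces the $7\times7$ eigenvalue problem to several small blocks. Third, for the type $(2,3)$ configuration $C_2$ I would quote the proof of Theorem~\ref{thmLogOpt} directly: the inequalities \eqref{D1}, \eqref{D2}, \eqref{D3} together with the quartic analysis show the Hessian is positive semidefinite with kernel equal to the orbit directions only, hence index $0$. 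For $C_1$ of type $(1,2,2)$ I would use Theorem~\ref{thmPyramid}: the explicit one-parameter perturbation $\widetilde X_t$ there has $f(t)$ strictly maximized at $t=0$, exhibiting one negative direction; then I would show (again by the isotypic decomposition) that all other blocks are positive, giving index exactly $1$. For $C_0$ of type $(0,5)$ I would exhibit two independent energy-decreasing perturbations --- these are exactly the perturbations of Theorem~\ref{thmDeg} applied to the degenerate configuration --- and then verify positivity of the remaining $5$-dimensional complement.

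The main obstacle will be the positivity of the complementary blocks: showing that once the known negative directions (two for $C_0$, one for $C_1$, zero for $C_2$) are removed, the Hessian is strictly positive on the rest of the quotient tangent space. For $C_2$ this is essentially done in Section~5, but for $C_0$ and $C_1$ it requires either a direct diagonalization of the remaining blocks or a symmetry argument pinning down which isotypic components can support negative eigenvalues. I expect the cleanest route is the representation-theoretic one: the stabilizer of $C_1$ in $O(3)$ contains $S_2\times S_2$ plus reflections, that of $C_0$ contains $S_5$ or $S_2\times S_3$ depending on the exact configuration, and in each case the multiplicity of the relevant sign representation in the tangent module equals the predicted Morse index, while the other isotypic pieces are forced positive by the semidefiniteness already established in the global argument of Theorem~\ref{thmLogOpt}. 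I would also need to confirm that none of these critical points is degenerate as a critical point of $E_{\log}$ on the \emph{quotient} $\conf(3,5)$ (the Hessian is nondegenerate there), so that ``Morse index'' is well defined; this follows from the strict inequalities in the equality-analysis at the end of Section~5 once the orbit directions are quotiented out.
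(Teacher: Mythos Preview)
Your outline is a genuinely different approach from the paper's. The paper does not do any symmetry decomposition at all: for $C_2$ it simply cites Theorem~\ref{thmLogOpt} to get $\mu(C_2)=0$, and for $C_0$ and $C_1$ it fixes a gauge $\phi_1=\theta_1=\phi_2=0$, writes $E_{\log}$ explicitly as a function of the seven remaining spherical coordinates, and then computes the $7\times 7$ Hessian and its eigenvalues at the explicit points $v(C_0)$ and $v(C_1)$ by ``direct calculations.'' No isotypic components, no appeal to Theorems~\ref{thmDeg} or~\ref{thmPyramid}; it is a brute-force second-derivative check.

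Your structural route is more illuminating but, as you yourself flag, it is incomplete in exactly the place that matters. Theorems~\ref{thmDeg} and~\ref{thmPyramid} exhibit \emph{some} negative directions, so they give the lower bounds $\mu(C_0)\ge 1$ and $\mu(C_1)\ge 1$; they do not by themselves give $\mu(C_0)\ge 2$, nor do they give any upper bound. To pin down the indices you must actually carry out the block diagonalization and check the signs on every block, and nothing proved in Sections~4--5 does this for you at $C_0$ or $C_1$: the positivity arguments in the proof of Theorem~\ref{thmLogOpt} are tailored to the two-simplex configuration $C_2$ and do not transfer.

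There is also a concrete misidentification. From the explicit $v(C_0)$ in the paper, $C_0$ is the \emph{regular pentagon on a great circle} (the degenerate case of Theorem~\ref{thmDeg}), so its stabilizer in $O(3)$ is the dihedral group $D_5$ times the reflection through the equatorial plane, not $S_5$ or $S_2\times S_3$. With that correction your isotypic plan becomes precise: the five out-of-plane coordinates carry the permutation representation of $C_5$, which splits as trivial $\oplus$ (first harmonic) $\oplus$ (second harmonic); the first harmonic is killed by the $SO(3)$-quotient, the trivial piece (all points move to a smaller parallel) is easily seen to be positive, and the second-harmonic $2$-plane must then be checked to be negative. That last check is exactly the computation the paper does numerically and that your proposal leaves open.
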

\begin{proof} Denote by $\mu(x)$ the Morse index of $E_{log}$ on $\conf(3,5)$ at $x$. Since $E_{log}$ has at $C_2$ a minimum, $$\mu(C_2)=0.$$

Let $x, y \in {\mathbb S}^{2}$  with spherical coordinates $(\phi,\theta)$ and $(\phi',\theta')$. Then 
$$
[x-y|^2=2-2(\sin{\theta}\sin{\theta'}\cos(\phi-\phi')+\cos{\theta}\cos{\theta'}).  \eqno (21)
$$

Consider $X=\{x_1,...,x_5\} \subset {\mathbb S}^{2}$ with spherical coordinates $\{(\phi_1,\theta_1),...,(\phi_5,\theta_5)\}$ as a point in $\conf(3,5)$. Without loss of generality we can assume that  $\phi_1=\theta_1=\phi_2=0$. Then 
$$
v=v(X):=(\theta_2,\phi_3,\theta_3,\phi_4,\theta_4,\phi_5,\theta_5)
$$
is a vector of seven variables that uniquely defined a point in the configuration space $\conf(3,5)$. It is not hard to show that 
$$
v(C_0)=(2\pi/5,0,4\pi/5,\pi,4\pi/5,\pi,2\pi/5), \quad v(C_1)=(w,\pi/2,w,\pi,w,3\pi/2,w), \; w:=\arccos(-1/4). 
$$

Using (21) we can represent  $E_{log}(X)$ as a function $f(v)$. Then the Hessian $H(v)$ of $f(v)$ and its eigenvalues at $v(C_0)$ and $v(C_1)$ can be found by direct calculations.  These calculations show that 
$$\mu(C_0)=2, \quad \mu(C_1)=1.$$
\end{proof}

It is an interesting problem: 

\medskip

{\em Find the Morse indexes of all critical points of $E_{log}$ on $\conf(d,d+2)$ for all $d$. }

\subsection{Extensions of the main theorems for other potentials}

It is of interest to determine other potentials for which we are able to characterize the optimal $d+2$-point configurations on $\mathbb{S}^{d-1}$. 

\medskip 

(a) {\bf Riesz potentials:}  It is an interesting open problem to find all critical configurations of  the energy $E_h$ on $\conf(3,5)$ for Riesz potential interaction $h(t)=(1-t)^{-s/2}$ (see \cite[Section 2.5]{BHS} for details). Even in this simple case of five points on $\mathbb{S}^2$, rigorous results are limited. Here we have two competing configurations: the triangular bi-pyramid (TBP) consisting of one point at the north pole, one at the south pole, and three arranged in an equilateral triangle around the equator; and the regular four-pyramid (FP) with square base and varying height on the parameter $s$.

For $s=0$ it is shown in \cite{DLT} that TBP is the unique up to rotations minimizer of $E_{\rm log}$. Utilizing computer aided proofs the optimality of TBP is established for $s=-1$ in \cite{HoS} and for $s=1,2$ in \cite{Sch}, which was recently extended \cite{Sch2} to  show that there is a constant $s_* \approx 15.048081$ (conjectured in \cite{MKS}), such that the TBP is the global minimizer for $s\leq s_*$, and for $s_*\leq s<15+\frac{25}{512}$ the FP is the global minimizer. 

The determination of $d+2$ points on $\mathbb{S}^{d-1}$ with minimal Riesz energy is an interesting open problem, even for $d=4$ and particular values of $s$, say the Newton potential interaction case $s=2$. We expect similar transition value $s_*(d)$ so that for $0\leq s\leq s_*(d)$ the optimal configuration will be the configuration consisting of two orthogonal simplexes of minimal cardinality difference.

\medskip 

(b) {\bf Bi-quadratic potential energy:} In \cite{T} it was shown that the TBP
is the unique up to rotations optimal spherical configuration of five points on $\mathbb{S}^2$ for the bi-quadratic potential 
\[h(x_i \cdot x_j)=a\, (x_i\cdot x_j )^2+b\, (x_i \cdot x_j )+c,\quad a>0, \ b>2a.\]

We expect that our results will extend to bi-quadratic potentials and $d+2$ points on $\mathbb{S}^{d-1}$ and intend to return to this problem in the near future.
\medskip 

(c) {\bf Optimal  1--designs of cardinality $d+2$ in $d$--dimensions:} 

Since log-optimal stationary configurations $X$ have their centroid at the origin, It is of interest to minimize various energies among the class of configurations that are $1$-designs, i.e. $x_1+...+x_N=0$, also called balanced configurations. Minimizing energy over balanced configurations is an interesting problem that has physical meaning, we intend to return to it in the near future as well.

\medskip

(d) {\bf SDP bounds for optimal configurations:} 

Recently, in   \cite{Mu19} have been obtained new SDP bounds for distance distribution and distance graphs of spherical codes. It is an interesting problem to extend these bounds for optimal configurations. 


\subsection{Optimal $(d+k)$-configurations in $d$--dimensions}

Now we consider $X\subset {\Bbb S}^{d-1}$ with $d+2\le |X| \le 2d$.  Rankin's theorem states that   if $X$ is a subset of ${\mathbb S}^{d-1}$ with $|X|\ge d+2$, then the minimum distance between points in $X$ is at most $\sqrt{2}$. For the case $|X|=2d$ Rankin proved that $X$ is a regular cross--polytope. 
 Wlodzimierz Kuperberg \cite{K} extended Rankin's theorem. 
 
 \medskip
 
\noindent {\bf Kuperberg's theorem:} {\em Let  $X$ be a
 ($d + k)$--point subset of  ${\mathbb S}^{d-1}$ with  $2\le k \le d$ such that the minimum distance between points is at least $\sqrt{2}$. Then  ${\Bbb R}^d$ splits into the orthogonal product $\prod_{i=1}^k{L_i}$ of nondegenerate linear subspaces $L_i$ such that for  $S_i:=X\cap L_i$ we have $|S_i|=d_i+1$ and $\rank(S_i)=d_i$  $(i = 1, 2, . . . , k)$, where  $d_i:= \dim{L_i}$.}
 
  \medskip

The following theorem is equivalent to \cite[Theorem 4.2]{Mu20}.

\begin{theorem}\label{th62} Let $h:[-1,1)\to{\mathbb R}$ be a convex monotone increasing function. Let  $X$ be a
subset of ${\mathbb S}^{d-1}$ of cardinality  $d + k$ with  $2\le k \le d$ such that the minimum distance between distinct points in $X$ is at least $\sqrt{2}$. Then the set of all local minima of $E_h$ (see (2) in Sect. 1) consists of $k$ orthogonal to each other regular $d_i$--simplexes $S_i$  such that all $d_i\ge1$ and $d_1+...+d_k=d$.
\end{theorem}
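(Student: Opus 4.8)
\textbf{Proof proposal for Theorem \ref{th62}.}

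The plan is to reduce the statement to the $k=2$ case already handled by Theorems \ref{thmMain}--\ref{thmLogOpt} and their proof technique, combined with Kuperberg's theorem. First I would invoke Kuperberg's theorem to learn that, under the hypothesis that the minimum distance is at least $\sqrt{2}$, the ambient space splits as an orthogonal product $\mathbb{R}^d=\prod_{i=1}^k L_i$ with $S_i=X\cap L_i$ a set of $d_i+1$ points spanning $L_i$, where $d_i=\dim L_i$. Since the minimum distance within each $S_i$ is at least $\sqrt{2}$ and $S_i$ consists of $d_i+1$ unit vectors in a $d_i$-dimensional space, a standard Gram-matrix argument (the Gram matrix has diagonal $1$, off-diagonal entries $\le 0$, and is positive semidefinite of rank $d_i$) forces $S_i$ to be a regular simplex with $x\cdot y=-1/d_i$ for distinct $x,y\in S_i$; in particular each $S_i$ is automatically balanced with centroid at the origin of $L_i$. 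This already shows that \emph{every} admissible $X$ has the claimed form as a set of $k$ orthogonal regular simplexes, so the only substantive content is that such configurations are precisely the relative minima of $E_h$ — i.e., that each one \emph{is} a relative minimum, and that no other critical behaviour occurs inside the constrained family.

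Next I would show each such $X$ is a relative minimum of $E_h$ on $\mathbb{S}^{d-1}$ (subject only to staying on the sphere). I would mimic the second-order perturbation analysis in the proof of Theorem \ref{thmLogOpt}: write $y_i=x_i+h_i$ with $\|h_i\|<\epsilon$, expand $h(y_i\cdot y_j)$ to second order, and use $2x_i\cdot h_i=-\|h_i\|^2$ to eliminate the radial components. Because $h$ is convex and increasing, the relevant quadratic form is bounded below by the same combination of sums of squares that appears in \eqref{D1}--\eqref{D3}; the key point is that convexity gives $h''\ge 0$ and monotonicity gives $h'\ge 0$, and the orthogonality between distinct $L_i$ makes the cross-terms between different simplexes decouple exactly as in the $k=2$ computation. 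The block-diagonal structure means the Hessian splits into (i) intra-simplex blocks, each of which is nonnegative by (the $h$-analogue of) Lemma \ref{MatrixIneq1}, and (ii) pairwise inter-simplex blocks, each nonnegative by Lemma \ref{MatrixIneq2}. Hence the second variation is positive semidefinite, and one checks that its kernel consists only of infinitesimal rotations (already quotiented out) and the radial directions, so $X$ is a strict relative minimum modulo isometry.

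Finally I would argue there are no \emph{other} relative minima: any relative minimum of $E_h$ on the stratum of configurations with minimum distance $\ge\sqrt{2}$ and cardinality $d+k$ is in particular a configuration satisfying the hypotheses of Kuperberg's theorem (if the minimum distance were exactly $\sqrt 2$ on the boundary, a short argument using monotonicity of $h$ shows one can still not decrease energy by pushing points together), hence has the orthogonal-simplex form; and conversely each such form was just shown to be a relative minimum. Combining the two directions gives the stated equivalence, and the constraints $d_i\ge 1$, $\sum d_i=d$ are exactly the non-degeneracy and dimension-count conditions from Kuperberg's theorem.

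\textbf{Main obstacle.} The delicate step is the second-variation computation for a general convex increasing $h$ rather than the specific logarithmic potential: one must verify that the substitution $1-y_i\cdot y_j=(1-x_i\cdot x_j)(1-z_{i,j})$ and the subsequent grouping into the forms $D_1,D_2,D_3$ still produces sign-definite contributions when $-\log(1-t)$ is replaced by a general $h$, i.e. that the weights coming from $h'$ and $h''$ evaluated at the common inner-product values $-1/d_i$ and $0$ respect the inequalities of Lemmas \ref{MatrixIneq1} and \ref{MatrixIneq2}. Since those lemmas are purely algebraic (they do not reference $h$), the real work is bookkeeping the positive coefficients; I expect this to go through cleanly but it is where the proof could hide a subtlety, particularly at the boundary case where two points are at distance exactly $\sqrt 2$.
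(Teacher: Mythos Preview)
Your Gram-matrix step is wrong, and it is load-bearing. You assert that $d_i+1$ unit vectors in $L_i\cong\mathbb{R}^{d_i}$ with all pairwise inner products $\le 0$ and Gram matrix of rank $d_i$ must form a \emph{regular} simplex. They need not. In $\mathbb{R}^2$ take $v_1=(1,0)$, $v_2=(0,1)$, $v_3=(0,-1)$: the Gram matrix has diagonal $1$, off-diagonals $0,0,-1$, is PSD of rank $2$, and all pairwise distances are $\ge\sqrt 2$, yet this is not the regular $2$-simplex. More generally any three unit vectors at mutual angles in $[\pi/2,3\pi/2]$ work. So Kuperberg's theorem gives you the orthogonal splitting $\mathbb{R}^d=\prod L_i$ and the cardinalities $|S_i|=d_i+1$, but \emph{not} the regularity of each $S_i$; that is exactly the content you still have to supply, and your proposal skips it.

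The paper's argument fills this gap differently and much more cheaply than your second-variation program. Once Kuperberg gives the orthogonal splitting, any perturbation of $S_i$ inside $L_i$ leaves all inter-block inner products at $0$, so the energy decouples into a constant inter-block part plus the intrinsic $h$-energies of the $S_i$ on their own spheres $\mathbb{S}^{d_i-1}$. One then invokes the classical optimality-of-simplices result (cited as \cite[Theorem 4.1]{Mu20}): for $d_i+1$ points on $\mathbb{S}^{d_i-1}$ and convex increasing $h$, the regular simplex is the unique minimizer. Since the regular simplex has all inner products $-1/d_i<0$, it lies strictly inside the constraint set, so a non-regular $S_i$ can always be improved without violating the $\sqrt 2$ constraint; hence at a relative minimum each $S_i$ is regular. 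Your proposed route---redoing the full second-order analysis of Theorem \ref{thmLogOpt} for general convex increasing $h$ across $k$ blocks---would at best reprove this, and your own ``main obstacle'' paragraph correctly flags that the coefficient bookkeeping for arbitrary $h$ is nontrivial; the paper sidesteps it entirely by citing the simplex result.
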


Actually, this theorem easily follows from the optimality of simplices (\cite[Theorem 4.1]{Mu20}) and Kuperberg's theorem. 

\medskip

Let $h(t):=-\log(1-t)$.  If $k=2$, then Theorem 1.1 yields Theorem \ref{th62}. Moreover, we don't need the assumption that for all $x,y\in X$ with $x\ne y$ we have  $|x-y|\ge\sqrt{2}$. 
For the case $k=d$ it is proven by \cite{KY}  that Log--optimal X is a regular cross--polytope. 

 It is an interesting open problem to extend Theorem 1.1 for $2<k\le d$. Our conjecture is that for all $k$ such that $2\le k \le d$   we have the same result as in Theorem \ref{th62}.

\medskip

\noindent{\bf Acknowledgment.}  This paper is based upon work supported by the National Science Foundation under Grant No. DMS-1439786 while the authors were in residence at the Institute for Computational and Experimental Research in Mathematics in Providence, RI, during the Spring 2018 semester. The research of the first author was supported, in part, by a Simons Foundation grant no. 282207, and in part,
by the U. S. National Science Foundation under grant DMS-1936543. 

\medskip

\medskip

P. D. Dragnev, Purdue University Fort Wayne, Department of Mathematical Sciences, Fort Wayne, IN 46805, USA

 {\it E-mail address:} dragnevp@pfw.edu
\medskip

 O. R. Musin,  University of Texas Rio Grande Valley, School of Mathematical and
 Statistical Sciences, One West University Boulevard, Brownsville, TX 78520, USA \& \\

 {\it E-mail address:} oleg.musin@utrgv.edu

\end{document}